\renewcommand{\dim}{\operatorname{dim}}
\newcommand{\Aut}{\operatorname{Aut}}
\newcommand{\Hom}{\operatorname{Hom}}
\newcommand{\M}{{\mathcal M}}
\newcommand{\Q}{{\mathbb Q}}
\newcommand{\G}{\mathcal{G}}
\newcommand{\E}{{\mathcal E}}
\newcommand{\A}{{\mathcal A}}
\newtheorem{theorem}{Theorem}[section]
\newtheorem{lemma}[theorem]{Lemma}
\newtheorem{corollary}[theorem]{Corollary}
\newtheorem{proposition}[theorem]{Proposition}
\newtheorem{remark}[theorem]{Remark}
\newtheorem{definition}[theorem]{Definition}
\title[Finite groups as groups of self-homotopy equivalences]{{Every finite group is the group of self-homotopy equivalences of an elliptic space}}
\author{Cristina ~Costoya}
\address[C.~Costoya]{Departamento de Computaci\'on, \'Alxebra,
Universidade da Coru{\~n}a, Campus de Elvi{\~n}a, 15071  A Coru{\~n}a, Spain.}
\email[C.~Costoya]{cristina.costoya@udc.es}
\author{Antonio ~Viruel}
\address[A.~Viruel]{
Departamento de {\'A}lgebra, Geometr{\'\i}a y Topolog{\'\i}a,
Universidad de M{\'a}\-la\-ga, Campus de Teatinos, 29071 M{\'a}laga,
Spain.}
\email[A.~Viruel]{viruel@uma.es}
\thanks{First author is partially  supported by Ministerio
de Ciencia e Innovaci\'on (European FEDER support included), grant
MTM2009-14464-C02-01, and by Xunta Galicia grant Incite 09 207 215
PR}
\thanks{Second author is partially supported by Ministerio
de Ciencia e Innovaci\'on (European FEDER support included), grant
MTM2010-18089, and JA grants FQM-213 and P07-FQM-2863}
\begin{document}

\begin{abstract}
We prove that every finite group $G$ can be realized as the group of self-homotopy equivalences of infinitely many elliptic spaces $X$. To construct those spaces we introduce a new technique which leads, for example, to the existence of infinitely many inflexible manifolds. Further applications to representation theory will appear in a separate paper.
\end{abstract}

\maketitle

\section{Introduction}\label{intro}
For simply connected CW-complexes $X$ of finite type, we are
interested in the  group of homotopy classes of self-equivalences,
$\E (X)$,  and the realizability problem  for  groups.
Namely, if a given group $G$ can appear as the group $\E(X)$ for
some space $X$.  This problem has been placed as the first
to solve in
\cite{A1}, being around for over $50$ years and recurrently appearing in surveys and lists of open problems about self-homotopy equivalences
\cite{A2, F2, ka1, ka3, Ru}.  The difficulty of this question relies on the fact that techniques used so far are specific to certain groups  \cite{BM, BM2, FF, Ma, Oka},  and  have not proved fruitful when addressing this problem in general.

Apart from the group of automorphisms of a group $\pi$, $\Aut (\pi)$,  which is isomorphic to $ \E (K (\pi, n)) $  for an Eilenberg-MacLane space $K(\pi, n)$,  there is no global picture in this context.  A special mention deserves the cyclic group of order $2$, which is the group of automorphisms of the cyclic group of order $3$, hence it can be realized as $\E(K(\mathbb Z_3, n))$. Arkowitz and Lupton show that, moreover, it is  the group of self-homotopy equivalences of a  rational space,  pointing out  the surprising appearance of a finite group in rational homotopy theory and, raising the question of when finite groups can be realized by rational spaces  \cite{AL2}.

In this paper, we give a complete answer to the realizability problem for finite groups.

\begin{theorem}\label{thmgroup}
Every finite group $G$ can be realized as the group of
self-homotopy equivalences of infinitely many (non homotopy
equivalent) rational elliptic spaces $X$.
\end{theorem}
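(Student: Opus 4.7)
The natural strategy is to reduce the realizability problem for groups to a combinatorial problem on finite graphs, and then to translate graphs into minimal Sullivan models whose self-equivalences can be computed. Specifically, by a classical theorem of Frucht, every finite group $G$ is isomorphic to $\Aut(\G)$ for some finite (simple) graph $\G$, so it suffices to construct, for each finite graph $\G$, a rational elliptic space $X_\G$ with $\E(X_\G)\cong \Aut(\G)$.

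The construction I would attempt is as follows. Let $\G$ have vertex set $\{v_1,\dots,v_n\}$ and edge set $E$. Fix a suitable pair of even integers, and build a minimal Sullivan algebra
\[
(\Lambda V,d) = \bigl(\Lambda(x_1,\dots,x_n,y_1,\dots,y_n),d\bigr)
\]
with $|x_i|$ even and $|y_i|$ odd, chosen in non-trivial degrees, and with a differential of the shape
\[
dx_i = 0,\qquad dy_i = x_i^{N_i} + \sum_{(v_i,v_j)\in E} c_{ij}\,x_i^{a_{ij}}x_j^{b_{ij}},
\]
where the exponents are calibrated so that $(\Lambda V,d)$ is elliptic (finite dimensional rational cohomology). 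The $x_i$-vertices carry the combinatorial data of $\G$, and the $y_i$ are ``purely ornamental'' generators whose role is both to enforce ellipticity (by killing high powers) and to encode the adjacency relation of $\G$ inside the differential.

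The main step, and the main obstacle, is a rigidity argument showing that every self-equivalence of $(\Lambda V,d)$ comes from a permutation of the vertices that respects $E$, hence from an element of $\Aut(\G)$, and conversely every such permutation lifts to a self-equivalence. The direction $\Aut(\G)\hookrightarrow\E(X_\G)$ is formal once the differential is defined symmetrically in the combinatorial data. The hard direction requires choosing the degrees, the exponents $N_i,a_{ij},b_{ij}$, and the coefficients $c_{ij}$ carefully so that: (i) degree reasons force any automorphism $\varphi$ of $(\Lambda V,d)$ to send each $x_i$ to a scalar multiple of some $x_{\sigma(i)}$ (and $y_i$ to a scalar multiple of $y_{\sigma(i)}$ modulo decomposables), and (ii) compatibility with $d$ then forces $\sigma$ to preserve adjacency in $\G$ and pins down the scalars up to a trivial action. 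This is where the bulk of the technical work lives; it is essentially a graded version of the rigidity arguments used by Arkowitz--Lupton, but must be made uniform in $\G$ while simultaneously preserving ellipticity.

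Finally, to produce \emph{infinitely many} non-homotopy-equivalent examples realizing the same $G$, I would vary either the overall degree shift (rescaling $|x_i|$ and $|y_i|$ by the same factor) or the exponents $N_i$; distinct choices yield minimal models with different rational cohomology rings, hence non-homotopy-equivalent spaces $X_\G$, while the rigidity argument goes through unchanged and still yields $\E(X_\G)\cong\Aut(\G)\cong G$. The additional claim that $X_\G$ can be taken to be the rationalization of an inflexible closed simply connected manifold would be addressed separately by checking that the constructed model has formal dimension realizable by a Poincar\'e duality model and invoking known realization results.
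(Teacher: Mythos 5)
Your overall strategy --- reduce to Frucht's theorem and build a minimal Sullivan algebra from the graph whose self-equivalences are exactly $\Aut(\G)$ --- is precisely the strategy of the paper. But the proposal stops exactly where the actual mathematical content begins: you explicitly defer the rigidity argument (``this is where the bulk of the technical work lives''), and the specific shape of model you sketch would in fact fail. If each vertex contributes one even generator $x_i$ and one odd generator $y_i$ with
\[
dy_i = x_i^{N_i} + \sum_{(v_i,v_j)\in E} c_{ij}\,x_i^{a_{ij}}x_j^{b_{ij}},
\]
then (since all the $x_i$ must sit in a single even degree for $\Aut(\G)$ to act by permutation) every monomial appearing in $dy_i$ is homogeneous of the same word-length in the $x$'s, and the grading automorphism $x_i\mapsto \lambda x_i$, $y_i\mapsto \lambda^{N_i}y_i$ commutes with $d$ for every $\lambda\in\Q^*$. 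These maps are not homotopic to the identity, so $\E(X_\G)$ would contain a copy of $\Q^*$ (or a quotient of it) and in particular be infinite. The paper kills exactly this phenomenon by tensoring the vertex generators onto a rigid \emph{core} algebra $\Lambda(x_1,x_2,y_1,y_2,y_3,z)$ of Arkowitz--Lupton type, in which the relation $d(z)=y_1y_2x_1^4x_2^2-\cdots+x_1^{15}+x_2^{12}$ (with $\deg x_1=8$, $\deg x_2=10$) forces the scalars to satisfy $a_1^{15}=a_2^{12}=a_1^9a_2^5$, whose only rational solutions are $0$ and $1$; the vertex relations $d(z_v)=x_v^3+\sum_{(v,w)\in E}x_vx_wx_2^4$ are then coupled to this core through the factor $x_2^4$, which propagates the rigidity to the vertex scalars via $a(v,\sigma(v))^4=a(v,\sigma(v))$. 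Without some such inhomogeneous anchoring device your calibration problem has no solution, so the gap is not merely one of omitted detail.

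A secondary, smaller gap concerns ``infinitely many'': you propose to vary degrees or exponents for a fixed graph, which could work but is unverified (one must re-run the rigidity argument for each choice and check the resulting models are pairwise non-isomorphic). The paper instead uses the strong form of Frucht's theorem (infinitely many non-isomorphic connected graphs with the same automorphism group) together with the observation that non-isomorphic graphs yield non-isomorphic minimal Sullivan algebras, hence non-homotopy-equivalent rational spaces; this requires no new computation beyond the one already done.
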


To build up those spaces, we introduce a general method which we hope can be useful for obtaining
examples with interesting properties in subjects of different nature. For instance, it appears to
produce differential manifolds related with a question of Gromov,  as it is mentioned below (see also Section \ref{inflexibledgca}).  Indeed, we construct a contravariant functor  from a subcategory of finite graphs to the homotopy category of differential graded commutative algebras
whose cohomology is $1$-connected and of finite type.
Then, the geometric realization functor of
Sullivan \cite{Su} gives the equivalence of categories between the homotopy category of minimal Sullivan algebras, and the homotopy category of rational simply connected spaces of finite
type.

We remark that by dropping the requirement on the finiteness type of the differential graded algebras, our method can be extended to infinite, locally finite graphs. This is a subtle and technical point that is handled in \cite{CV3} where this extended version of our techniques is used to obtain an isomorphism criteria for a large class of groups, having thus consequences in representation theory.

In this paper,  we prove the following theorem.
\begin{theorem}\label{thmgraph}
Let $\G$ be a finite connected graph with more than one vertex.  Then, there
exists an elliptic minimal Sullivan algebra $\M_\G$ such that the group of automorphisms
of $\G$ is realizable by the group of self-homotopy equivalences
of $\M_\G$.
\end{theorem}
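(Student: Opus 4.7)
The plan is to build the minimal Sullivan algebra $\M_G$ functorially from $\G$ so that (i) every graph automorphism produces a distinct DGCA automorphism, and (ii) no ``exotic'' self-equivalences are introduced. The starting point is a small, rigid Sullivan algebra $R$ --- in the spirit of the one used by Arkowitz and Lupton --- whose only self-equivalence up to homotopy is the identity and whose indecomposables sit in a controlled range of degrees. To $R$ one then adjoins, for each vertex $v_i$ of $\G$, an even-degree generator $x_i$ (all $x_i$'s placed in the same degree $2a$), and for each edge $\{v_i,v_j\}$ an odd-degree generator $y_{ij}$ whose differential $dy_{ij}$ is a polynomial in the $x_\ell$'s (and possibly in some generators of $R$) that (a) singles out the pair $\{i,j\}$ among all unordered pairs, and (b) is asymmetric enough that it rules out rescalings of the $x_i$'s by non-trivial scalars. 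Ellipticity is then arranged by adjoining a bounded collection of further generators in sufficiently high degree, chosen so that their differentials truncate the rational cohomology without creating new symmetries.

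For the inclusion $\Aut(\G)\hookrightarrow \E(\M_G)$, any $\sigma\in\Aut(\G)$ acts on $\M_G$ by fixing $R$ and the auxiliary generators and sending $x_i\mapsto x_{\sigma(i)}$, $y_{ij}\mapsto y_{\sigma(i)\sigma(j)}$. Since $\sigma$ preserves the edge set this assignment commutes with $d$, and since the $x_i$'s are linearly independent indecomposables in a single degree, distinct permutations yield non-homotopic automorphisms. This gives a well-defined injective homomorphism $\Aut(\G)\hookrightarrow \E(\M_G)$.

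The substantive direction is the reverse inclusion, which is where the bulk of the effort goes. Given $[f]\in\E(\M_G)$, the rigidity of $R$ should force $f$ to act trivially on the indecomposables of $R$ up to homotopy. Minimality together with the degree restrictions then forces $f(x_i)=\lambda_i x_{\sigma(i)}$ for some permutation $\sigma$ of $\{1,\ldots,n\}$ and scalars $\lambda_i\in\Q^\times$. Applying $f$ to the relations $dy_{ij}=P_{ij}(x_\ell)$ and matching coefficients forces $\sigma$ to preserve the edge set, so $\sigma\in\Aut(\G)$, and simultaneously pins down $\lambda_i=1$ --- this is precisely the purpose of choosing the $P_{ij}$'s to be asymmetric. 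Consequently $[f]$ coincides with the class of the automorphism associated with $\sigma$, giving the desired isomorphism. The main obstacle, and the delicate engineering step, is designing $R$ and the polynomials $P_{ij}$ jointly so that the coefficient-matching arguments close up with no residual symmetry left over, all while keeping $\M_G$ minimal, elliptic, and functorial in $\G$; once this combinatorial bookkeeping is carried out, the theorem follows.
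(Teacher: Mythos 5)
Your high-level strategy is the same as the paper's: start from a rigid Arkowitz--Lupton-type algebra, adjoin even generators indexed by the vertices, encode the edge relation in differentials of odd generators, and then show by coefficient matching that any self-equivalence permutes the vertex generators by a graph automorphism. However, what you have written is a plan rather than a proof: you explicitly defer ``designing $R$ and the polynomials $P_{ij}$ jointly so that the coefficient-matching arguments close up,'' and that design is the entire mathematical content of the theorem. In particular, the claim that minimality and degree restrictions force $f(x_i)=\lambda_i x_{\sigma(i)}$ is not automatic: with all $x_i$ in one degree, a priori $f(x_i)=\sum_j a_{ij}x_j$ plus decomposables from $R$ of the same degree, and ruling out more than one nonzero $a_{ij}$ (and the decomposable terms) is exactly the delicate part. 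The paper does this by choosing $d(z_v)=x_v^3+\sum_{(v,w)\in E}x_vx_wx_2^4$ and observing that $f(dz_v)$ would otherwise contain monomials $x_ux_vx_w$ or $x_w^2x_r$ that cannot appear in $df(z_v)$; nothing in your sketch plays this role.

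There is also a concrete problem with your proposed encoding. With one odd generator $y_{ij}$ per edge whose differential lies in the ideal generated by $x_ix_j$, the associated pure algebra still contains all powers $x_i^k$ in cohomology, so ellipticity fails unless you add further odd generators killing each $x_i^N$ --- that is at least one extra generator \emph{per vertex}, not ``a bounded collection,'' and each such generator reopens the symmetry analysis (e.g.\ $x_i\mapsto\lambda x_i$ with $\lambda^N=\lambda^{?}$ constraints). The paper avoids this by using a single odd generator $z_v$ per vertex whose differential $x_v^3+\sum_{(v,w)\in E}x_vx_wx_2^4$ simultaneously encodes adjacency, kills $[x_v^3]$ up to nilpotence (so the algebra is elliptic), and, via $c(v,\sigma(v))=a(v,\sigma(v))^3=a(v,\sigma(v))a(w,\sigma(w))$ together with connectedness of $\G$, pins the scalars to $1$. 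Until you exhibit explicit degrees, differentials, and carry out the coefficient comparison, the reverse inclusion $\E(\M_\G)\subseteq\Aut(\G)$ --- the substantive direction, as you note --- remains unproved.
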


Our idea of using graphs has its origin on the following classical result (\cite{Frucht1, Frucht2}).
\begin{theorem}[Frucht, 1939]\label{Frucht}{Given a finite group $G$, there exist
infinitely  many non-isomorphic  connected (finite) graphs $\G$
whose automorphism group is isomorphic to $G$. }
\end{theorem}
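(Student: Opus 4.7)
The plan is to use the Cayley graph of $G$ and then replace each colored directed edge by a rigid uncolored gadget, in the classical manner of Frucht. Fix a finite generating set $S=\{s_1,\dots,s_n\}$ of $G$ with $e\notin S$ (if $G$ is trivial, start instead from any asymmetric rigid tree, of which there are infinitely many). Form the Cayley graph $\G_0=\operatorname{Cay}(G,S)$: the directed graph with vertex set $G$ and, for each $u\in G$ and each $i$, a directed edge $u\to us_i$ of color $i$. A direct check shows that the group of color- and direction-preserving automorphisms of $\G_0$ is precisely $G$ acting by left translation: any such $\phi$ satisfies $\phi(us_i)=\phi(u)s_i$, so since $S$ generates $G$, $\phi$ is determined by $\phi(e)$ and equals $L_{\phi(e)}$.

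Next, I convert $\G_0$ into a connected, undirected, uncolored finite graph $\G$ by replacing each colored directed edge $u\to v$ of color $i$ with a rigid asymmetric \emph{gadget} $\Gamma_i(u,v)$: for instance, the path $u-a-b-v$ carrying a pendant path of length $i$ attached at $a$ and a pendant path of length $i+1$ attached at $b$. The differing pendant lengths simultaneously distinguish head from tail (encoding direction) and distinguish color $i$ from color $j\neq i$. A valence-and-distance argument then shows that any automorphism $\psi$ of $\G$ preserves the set of original Cayley vertices (intrinsically recognizable by their higher valence), permutes the gadgets as blocks, and restricts on the Cayley vertices to a color- and direction-preserving automorphism of $\G_0$. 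Combined with the previous step and the obvious fact that every left translation extends to $\G$, this yields $\Aut(\G)\cong G$.

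For the infinitely-many assertion, introduce a parameter $k\geq 0$ and enlarge each gadget canonically, say by extending both pendant paths by $k$ while preserving the length gap of $1$. The rigidity argument is unaffected, so each resulting graph $\G^{(k)}$ still satisfies $\Aut(\G^{(k)})\cong G$; since $|V(\G^{(k)})|$ is strictly increasing in $k$, the family $\{\G^{(k)}\}_{k\geq 0}$ consists of pairwise non-isomorphic finite connected realizations of $G$.

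The main obstacle is the rigidity step: once all gadgets are glued together into $\G$, one must rule out every accidental symmetry—no end-for-end flip of a gadget, no interchange of two distinct-color gadgets sharing a vertex, and no nontrivial gadget-internal automorphism. This requires a careful choice of pendant-path decorations and a valence/distance analysis ensuring that the original Cayley vertices $G\subset V(\G)$ are intrinsically distinguishable, so that the induced action on them is guaranteed to be a color- and direction-preserving automorphism of $\G_0$ rather than merely a graph automorphism of its underlying undirected skeleton.
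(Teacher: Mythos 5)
The paper does not actually prove this statement: it is Frucht's classical theorem, imported as a black box via the citations \cite{Frucht1}, \cite{Frucht2}, so there is no internal proof to measure yours against. What you propose is precisely the standard Frucht construction (colored, directed Cayley graph followed by edge-replacement with asymmetric gadgets), so you are reconstructing the canonical argument from the literature rather than taking a different route. As an outline it is sound: the identification of the color- and direction-preserving automorphism group of the Cayley graph with $G$ acting by left translations is correct; the gadget with pendant paths of lengths $i$ and $i+1$ does simultaneously encode color and orientation, and it rules out end-for-end flips (which would force $i=i+1$) and cross-color identifications (which would force $i+1=j$ and $i=j+1$); the trivial group is correctly handled by asymmetric trees; and growing all pendant paths by a common parameter $k$ yields infinitely many pairwise non-isomorphic connected realizations simply because the vertex counts differ.

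Two points would need repair before this is a complete proof. First, the rigidity step --- that every automorphism of the final uncolored graph stabilizes the set of Cayley vertices, permutes gadgets as blocks, and induces a color- and direction-preserving automorphism of the Cayley graph --- is the entire content of the theorem, and you only assert it; the ``valence-and-distance argument'' must be written out, including the check that no interior vertex of a long pendant path can be confused with a gadget vertex or with a Cayley vertex, and that a gadget of color $i$ traversed backwards (pendant pattern $i+1$ then $i$) cannot be matched to a gadget of color $i+1$ traversed forwards (pattern $i+1$ then $i+2$). Second, the claim that Cayley vertices are ``intrinsically recognizable by their higher valence'' fails when $|S|=1$, since the Cayley vertices then have degree $2$, the same as interior pendant vertices; this case is unavoidable for $G=\Z/2$ (the only generating set avoiding the identity is a single involution) and must be treated separately, e.g.\ by distinguishing Cayley vertices through the degrees of their neighbors or by decorating them as well. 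With these standard emendations your argument is the accepted proof of the theorem.
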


Because the equivalence given by the geometric realization functor of Sullivan,  Theorem~\ref{thmgroup} follows directly from Theorem
\ref{thmgraph} and Theorem \ref{Frucht} (see
Proposition \ref{rem}).  Applying Theorem~\ref{thmgroup} to the trivial group, we supply a partial answer
 to Problem 3 in \cite{ka3}. This problem consists on
determining spaces, which were thought to be quite rare \cite{ka1}, with a trivial group of self-homotopy equivalences, the so-called homotopically-rigid spaces.
\begin{corollary}
There exist infinitely many rational spaces that are homotopically-rigid.
\end{corollary}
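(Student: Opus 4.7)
The plan is to specialize Theorem~\ref{thmgroup} to the trivial group $G=\{e\}$. Since $\{e\}$ is finite, the theorem directly produces infinitely many pairwise non homotopy equivalent rational elliptic spaces $X$ with $\E(X)\cong\{e\}$, and by the definition recalled just above, such spaces are precisely the homotopically-rigid ones that constitute Problem~3 in \cite{ka3}.

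The only point I would verify is that no implicit nontriviality hypothesis is hidden in Theorem~\ref{thmgroup}; inspection of its statement shows that the sole assumption on $G$ is finiteness, so $\{e\}$ is eligible. As a sanity check I would also trace the argument back through Proposition~\ref{rem}, Theorem~\ref{thmgraph}, and Frucht's Theorem~\ref{Frucht}: the latter, applied with $G=\{e\}$, produces infinitely many finite connected graphs $\G$ with more than one vertex whose automorphism group is trivial (the so-called asymmetric or identity graphs). Theorem~\ref{thmgraph} then associates to each such $\G$ an elliptic minimal Sullivan algebra $\M_\G$ with $\E(\M_\G)=\{e\}$, and Sullivan's geometric realization functor \cite{Su} transports these to rational spaces with trivial group of self-homotopy equivalences.

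I do not expect any serious obstacle at this stage: the entire substance of the corollary is absorbed into Theorem~\ref{thmgroup}, whose proof is the real work of the paper. The only mild subtlety worth a remark is that Theorem~\ref{thmgraph} excludes the one-vertex graph, but Frucht's theorem supplies infinitely many rigid connected graphs with at least two vertices (indeed, the smallest asymmetric graph has six vertices), so this exclusion is immaterial. The conclusion is therefore stronger than mere existence: one obtains an infinite family of pairwise non homotopy equivalent homotopically-rigid rational elliptic spaces, contradicting the earlier impression \cite{ka1} that such spaces are scarce.
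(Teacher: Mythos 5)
Your proposal is correct and matches the paper's argument exactly: the corollary is obtained by specializing Theorem~\ref{thmgroup} to the trivial group, which in turn rests on Frucht's supply of infinitely many asymmetric connected graphs fed through Theorem~\ref{thmgraph} and Proposition~\ref{rem}. Your extra checks (no hidden nontriviality hypothesis, the more-than-one-vertex condition being harmless) are sensible but not needed beyond what the paper already establishes.
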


Recall that in homotopy theory, naive dichotomy \cite{F1}
classifies spaces in either elliptic or hyperbolic.
 Ellipticity is a very severe restriction on a
space $X$ being remarkable that many of the spaces which play an
important role in geometry are rationally elliptic. In particular the rational cohomology of $X$ satisfies Poincar\'e duality
\cite{Hal} and, with extra hypothesis on the dimension of the fundamental class, $X$ has the rational homotopy type of a simply connected manifold (\cite{Ba, Su}). Indeed, spaces in Theorem \ref{thmgroup} can be chosen to have the rational homotopy type of a special class of simply connected manifolds called inflexible.
A manifold $M$ is called inflexible if all its self-maps  have degree $-1, 0, \text{ or } 1$. The work of Crowley-L{\"o}h \cite{CL} relates the existence of inflexible $d$-manifolds
with the existence of functorial semi-norms on singular homology in degree $d$ that are positive and finite on certain homology classes of simply connected spaces, solving in the negative a question raised by Gromov \cite{Gromov}. Following those ideas we prove the following.

\begin{theorem}\label{realizationinflex} Any finite group $G$ can be realized by the group of self-homotopy equivalences of the rationalization of an inflexible manifold $M$.
\end{theorem}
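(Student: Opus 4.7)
The plan is to promote the Sullivan algebras produced by Theorem \ref{thmgraph} to closed simply connected manifolds via the Barge--Sullivan realization theorem, and then to transfer to the manifold the algebraic inflexibility that is already present in the Sullivan model. Given a finite group $G$, I would first use Frucht's theorem to obtain infinitely many connected graphs $\G_i$ (each with more than one vertex) satisfying $\Aut(\G_i)\cong G$, and apply Theorem \ref{thmgraph} to each in order to produce elliptic minimal Sullivan algebras $\M_{\G_i}$ with $\E(\M_{\G_i})\cong G$. Since $\M_{\G_i}$ is elliptic, Halperin's theorem guarantees that $H^{*}(\M_{\G_i};\Q)$ is a simply connected Poincar\'e duality algebra of some formal dimension $n_i$, so the algebraic datum needed by Barge--Sullivan is already in place.

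The next step is to invoke the Barge--Sullivan theorem to realize $\M_{\G_i}$ as the minimal Sullivan model of a closed simply connected manifold $M$. When $n_i$ is odd or $n_i\equiv 2\pmod{4}$ no further condition is required; in the remaining case one must check that the intersection form on $H^{n_i/2}(\M_{\G_i};\Q)$ is equivalent over $\Q$ to a form with integral Gram matrix of the appropriate signature. The construction in Theorem \ref{thmgraph} lets us tune the cohomological degrees of the generators of $\M_{\G_i}$, and hence $n_i$; by choosing these degrees so that $n_i$ is odd (or more generally so that the signature obstruction vanishes) for infinitely many indices $i$, we obtain infinitely many non-homotopy-equivalent simply connected manifolds $M_i$ with $(M_i)_\Q\simeq \M_{\G_i}$, hence $\E((M_i)_\Q)\cong G$.

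To finish I would show that each such $M=M_i$ is inflexible. Given a self-map $f\colon M\to M$, its rational minimal model is an endomorphism $\phi\colon \M_{\G_i}\to \M_{\G_i}$. If $\phi$ is a quasi-isomorphism then $[f_\Q]\in \E(M_\Q)\cong G$; because $G$ is finite, its image in $\Aut(H^{n_i}(M;\Q))\cong\Q^{\times}$ lies in the only finite subgroup $\{\pm 1\}$, so $\deg f\in\{-1,+1\}$. If $\phi$ is not a quasi-isomorphism, I need to conclude $\deg f=0$; this requires proving that the Sullivan algebra $\M_{\G_i}$ is itself inflexible as a cdga, i.e. that every non-equivalence endomorphism acts as zero on the top class. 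The argument must exploit the combinatorial structure of $\M_{\G_i}$ coming from $\G_i$: any $\phi$ that fails to be an automorphism must collapse one of the generators whose cohomology class is a factor of the fundamental class, forcing $\phi^{*}[\M_{\G_i}]=0$.

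The main obstacle will be establishing this algebraic inflexibility of $\M_{\G_i}$, because it cannot be read off directly from Theorem \ref{thmgraph}, which only controls the self-equivalences. The Barge--Sullivan step is essentially bookkeeping on dimensions and intersection forms provided by the graph-based construction, and the degree-$\pm 1$ conclusion for self-equivalences is formal once the finiteness of $G$ is used; the genuine work sits in the last dichotomy, which must reduce every self-endomorphism of $\M_{\G_i}$ to either an element of $G$ or a map that degenerates on a generator pairing nontrivially with $[\M_{\G_i}]$.
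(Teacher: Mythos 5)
There is a genuine gap in the realization step. The formal dimension of $\M_\G$ is $208+80\vert V\vert$, which is always divisible by $4$, so applying Barge--Sullivan directly to $\M_\G$ lands you squarely in the bad case where the quadratic-form/signature obstruction must be dealt with. Your proposed escape --- ``tuning the cohomological degrees of the generators so that $n_i$ is odd'' --- is not available: the degrees $8,10,33,35,37,40,119$ are rigidly chosen so that the degree bookkeeping in Lemma \ref{lem2} forces every self-map into the list $\{f_\sigma\}\cup\{f_0,f_1\}$, and changing them would invalidate that classification (and you give no argument that an odd-dimensional variant exists). The paper's missing idea is Proposition \ref{tilde}: adjoin a single odd generator $y$ with $\tilde d(y)=x$, a representative of the fundamental class, to obtain $\widetilde\M_\G$ of odd formal dimension $415+160\vert V\vert$; then Remark \ref{lastremark} gives $[\M_\G,\M_\G]\cong[\widetilde\M_\G,\widetilde\M_\G]$, hence $\E(\M_\G)\cong\E(\widetilde\M_\G)$, Lemma \ref{inflexible} shows $\widetilde\M_\G$ inherits inflexibility, and Barge--Sullivan applies with no obstruction since $415+160\vert V\vert\not\equiv 0\pmod 4$. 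Without some such device (or an actual verification of the intersection-form condition for $\M_\G$ itself), your proof does not produce a manifold.

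You also mislocate the remaining difficulty. You say the ``genuine work'' is to show that every non-equivalence endomorphism of $\M_{\G}$ kills the fundamental class, claiming this ``cannot be read off'' from the earlier results because they ``only control the self-equivalences.'' In fact Theorem \ref{sullgraph} computes the full monoid $[\M_\G,\M_\G]\cong\Aut(\G)\sqcup\{f_0,f_1\}$, not just the group of units; since this monoid is finite and the mapping degree is multiplicative, the set of realized degrees is a finite multiplicatively closed subset of $\Q$ and hence lies in $\{-1,0,1\}$. So the inflexibility of $\M_\G$ is immediate, and the substantive content of the paper's proof of Theorem \ref{realizationinflex} is precisely the $\widetilde\M$ construction and the verification (Lemma \ref{inflexible}, via $\deg\widetilde f=\deg(\widetilde f\vert_{\M_\G})^2$) that inflexibility and $\E$ survive the passage to odd dimension.
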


\begin{corollary}\label{inflex}
For every $n \in{\mathbb N}$, $n>1$, there are functorial semi-norms on singular homology in degree $d=415 + 160n$ that are positive and finite on certain homology classes of simply connected spaces.
\end{corollary}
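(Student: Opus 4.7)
The plan is to combine Theorem~\ref{realizationinflex} with the theorem of Crowley--L\"oh \cite{CL}, which converts an inflexible simply connected $d$-manifold into a functorial semi-norm on singular homology in degree $d$ that is positive and finite on the image of the fundamental class under maps into simply connected spaces. So the structural content of the corollary is already guaranteed; what remains is to exhibit, for every $n>1$, an inflexible manifold of the specific dimension $415+160n$.

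The natural way to produce such a family is to apply Theorem~\ref{realizationinflex} to a one-parameter family of finite groups whose Frucht-type graphs have size growing linearly in the parameter. I would take $G=\Z/n$ for $n>1$: by Frucht's theorem (Theorem~\ref{Frucht}), there is a connected graph $\G_n$ on at least two vertices whose automorphism group is $\Z/n$, and Frucht's construction scales linearly, with the number of vertices (and edges) of $\G_n$ an affine function of $n$. Running $\G_n$ through the functor of Theorem~\ref{thmgraph} produces an elliptic minimal Sullivan algebra $\M_{\G_n}$, and the underlying arguments of Theorem~\ref{realizationinflex} upgrade this to an inflexible simply connected manifold $M_n$ with $\E(M_{n,\Q})\cong \Z/n$.

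Next I would compute the formal dimension of $M_n$. The construction in Theorems~\ref{thmgraph} and~\ref{realizationinflex} assigns generators of prescribed even and odd degrees to each vertex and edge of $\G_n$ (plus a fixed collection of auxiliary generators independent of $n$). Summing the degrees of the top-class contributions, the formal dimension of $\M_{\G_n}$ is an affine function $an+b$ of $n$; matching the parameters to the actual construction should yield $a=160$ and $b=415$. Applying the realization results of Barge--Sullivan (\cite{Ba},\cite{Su}) cited earlier in the paper, one obtains a simply connected inflexible manifold $M_n$ of dimension $415+160n$.

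Finally, invoking Crowley--L\"oh to each $M_n$ produces functorial semi-norms on singular homology in degree $415+160n$ that are positive and finite on the pushforwards of $[M_n]$ into simply connected spaces, which is precisely the statement. The main obstacle, and really the only nontrivial step, is the degree bookkeeping in the middle paragraph: one must track precisely how the graph-theoretic data of $\G_n$ is converted into generator degrees in $\M_{\G_n}$ and verify that the slope and intercept come out to exactly $160$ and $415$. This is delicate but essentially mechanical once the construction of $\M_\G$ from Theorem~\ref{thmgraph} is made explicit.
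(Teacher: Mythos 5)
Your overall strategy --- feed an inflexible simply connected manifold of dimension $d$ into the Crowley--L\"oh machine \cite{CL} to get a functorial semi-norm in degree $d$ --- is the right one and is what the paper intends. But the step you defer as ``delicate but essentially mechanical'' is precisely where your argument breaks, because of how you parametrize the family.

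The formal dimension of $\M_\G$ is $208+80\vert V\vert$ (Lemma~\ref{elliptic}), so the dimension of the manifold realizing $\widetilde\M_\G$ is $2(208+80\vert V\vert)-1=415+160\vert V\vert$: it is controlled by the \emph{number of vertices of the graph}, not by any parameter of the group being realized. To land in degree exactly $415+160n$ you therefore need a connected graph with exactly $n$ vertices. Your choice is to take $G=\Z/n$ and let $\G_n$ be a Frucht graph for it; but Frucht's construction does not produce a graph on $n$ vertices (for instance, no graph on $3$ vertices has automorphism group $\Z/3$ --- every graph on $3$ vertices has automorphism group $S_3$ or $\Z/2$), and in general the vertex count of a Frucht graph for $\Z/n$ has slope strictly greater than $1$ in $n$. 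So the dimensions you obtain form some other arithmetic progression, not $415+160n$, and no amount of bookkeeping inside the fixed construction of $\M_\G$ will repair this: the slope $160$ per vertex and intercept $415$ are already forced by the definition of $\M_\G$ and of $\widetilde\M_\G$.

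The fix is to drop the group-realization input entirely: the corollary makes no claim about self-equivalence groups. Take \emph{any} finite connected graph $\G$ on $n\geq 2$ vertices (a path, say). Then $\M_\G$ is elliptic with $[\M_\G,\M_\G]$ finite (Theorem~\ref{sullgraph}), hence $\M_\G$ and therefore $\widetilde\M_\G$ (Lemma~\ref{inflexible}) are inflexible, and since $415+160n\not\equiv 0\pmod 4$ the Barge--Sullivan criterion realizes $\widetilde\M_\G$ by a simply connected closed manifold $M$ of dimension $415+160n$, which is inflexible because its rationalization is. Applying \cite{CL} to $M$ gives the semi-norm. This is exactly the content of the proof of Theorem~\ref{realizationinflex} with the group-theoretic decoration stripped out.
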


This paper is organized as follows. In Section \ref{geomreal}, for any finite connected graph $\G$, we construct an elliptic minimal Sullivan algebra $\M_\G $ such that its group of self-homotopy equivalences, $ \E (\M_\G)$, is isomorphic to the automorphims group of the graph, $\Aut (\G)$. The construction, restricted to a suitable category of graphs, gives a contravariant faithful functor injective on objects (see Remark \ref{contrafunctor}). This algebra $\M_\G$ is inspired on \cite{AL2} where some examples of minimal Sullivan algebras, verifying that the monoid of homotopy classes of self-maps is neither trivial nor infinite, are constructed,  thus disproving a conjecture of Copeland-Shar \cite{C-S}. Our construction gives infinitely many examples of this nature (see Theorem \ref{sullgraph}). In Section \ref{inflexibledgca}, we upgrade our construction in order for it to be the rational homotopy type of an inflexible manifold $M$.

For the basic facts about graphs, we refer to \cite{bollo}. Only simple graphs $\G = (V, E)$ are considered. This means that they do not have loops and they are not directed, that is, for any $u $ vertex in $V$, the edge $(u, u)$
is not in  $E$ and, if an edge $(v, w) $ is in $E$,  then $(w, v)$ is also in $E$.  We refer to \cite{FHT2} for basic facts in rational homotopy theory. Only simply connected $\Q$-algebras of finite type are considered. If $W$ is a graded rational vector space, we write $\Lambda W$ for the free commutative graded algebra on $W$. This is a symmetric algebra on $W^{\text{even}}$  tensored with an exterior algebra on $W^{\text{odd}}$. A Sullivan algebra is a commutative differential graded algebra which is free as commutative graded algebra on a simply connected graded vector space $W$ of finite dimension in each degree. It is minimal if in addition $d(W) \subset \Lambda^{\geq 2}W$.  A Sullivan algebra is pure if  $d= 0$ on $W^{\text{even}}$ and $d (W^{\text{odd}}) \subset W^{\text{even}}$.

\section{From graphs to Elliptic Sullivan Algebras}\label{geomreal}
Ellipticity for a Sullivan algebra $(\Lambda W, d)$ means that
{both $W$  and $H^\ast (\Lambda W)$ are finite-dimensional.
Hence, the cohomology is  a Poincar\'e duality algebra \cite{Hal}. One can easily compute the degree of its fundamental class  (a fundamental class of a Poincar\'e duality algebra $H = \Sigma_{i=0}^n H^i$ is a generator of $H^n$,  $n$ is called the formal dimension of the algebra) by the formula:
 %\cite[Theorem 32.6]{FHT2}:
\begin{equation}\label{dimensiontop}
\overset{p}{\underset{i=1}{\Sigma}}(\deg x_i) - \overset{q}{\underset{j=1}{\Sigma}}(\deg y_j -1)
\end{equation}
where $\deg x_i$ are the degrees of the elements on a basis of $W^{\text{odd}}$ and $\deg y_j$ of a basis of $W^{\text{even}}$.

\begin{definition} For a finite  connected graph $\G = (V, E)$ with more than one vertex,
we define the minimal Sullivan algebra associated to $\G$ as
$$\M_\G= \Big(\Lambda(x_1,x_2,y_1,y_2,y_3,z)\otimes\Lambda(x_v,z_v \mid
v\in V),d\Big)$$
 where degrees and differential are described by
 \begin{alignat*}{2}
&\deg x_1 = 8, \qquad \qquad&d(x_1)&=0\\
&\deg x_2 = 10,& d(x_2)&=0\\
&\deg y_1 = 33,& d(y_1)&=x_1^3x_2\\
&\deg y_2= 35,& d(y_2)&=x_1^2x_2^2\\
&\deg y_3 = 37,& d(y_3)&=x_1x_2^3\\
&\deg x_v = 40,& d(x_v)&=0\\
&\deg z= 119,& d(z)&=y_1y_2x_1^4x_2^2-y_1y_3x_1^5x_2+y_2y_3x_1^6+x_1^{15}+x_2^{12}\\
&\deg z_v = 119, & d(z_v)&=x_v^3+\sum_{(v,w)\in E}x_vx_wx_2^4.
\end{alignat*}
\end{definition}

\begin{lemma}\label{elliptic}
The constructed
$\M_\G =(\Lambda W, d)$ is an elliptic minimal Sullivan algebra of formal dimension  $n = 208 + 80 \vert V \vert$, with $\vert V \vert$ the order of the graph.
\end{lemma}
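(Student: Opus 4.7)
The plan is to establish three things: (i) that $\M_\G$ is a well-defined minimal Sullivan algebra, (ii) that its formal dimension is $208+80|V|$, and (iii) that its cohomology is finite-dimensional (ellipticity).

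Parts (i) and (ii) are routine. For (i), inspection shows $d(\omega) \in \Lambda(W^{<\deg \omega})$ and that $d(\omega)$ has word length at least two for every generator, so the Sullivan and minimality conditions hold. The only delicate check of $d^2=0$ is on $z$: a direct expansion of $d$ applied to the formula for $d(z)$ produces six monomials $\pm y_i x_1^a x_2^b$ which pair up and cancel, while $x_1^{15}$ and $x_2^{12}$ are $d$-closed. For $z_v$ the identity $d^2 z_v = 0$ is immediate since $d$ vanishes on $x_v$, $x_w$ and $x_2$. For (ii), formula (\ref{dimensiontop}) gives $(33+35+37+119+119|V|)-(7+9+39|V|)=208+80|V|$.

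The real content is (iii). By Halperin's theorem (\cite{FHT2}), for $\dim W < \infty$ the algebra $(\Lambda W, d)$ is elliptic if and only if its associated pure Sullivan algebra $(\Lambda W, d_\sigma)$ is elliptic, where $d_\sigma$ is obtained from $d$ by discarding every term containing an odd generator. For $\M_\G$ this means $d_\sigma(y_i)=d(y_i)$, $d_\sigma(z_v)=d(z_v)$, and $d_\sigma(z)=x_1^{15}+x_2^{12}$ (the three $y_i y_j$ cross-terms drop out). Ellipticity of the pure model reduces, by the standard criterion, to showing that the ideal $I \subset \Q[x_1,x_2,x_v\mid v\in V]$ generated by these polynomials has finite codimension, i.e.\ that $R := \Q[x_1,x_2,x_v]/I$ is finite-dimensional.

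To close the argument I would first observe that $x_2^{13}\in I$: multiplying $x_1^{15}+x_2^{12}$ by $x_2$ yields $x_1^{15}x_2+x_2^{13}$, and $x_1^{15}x_2\in(x_1^3 x_2)\subset I$. Hence $x_2$ is nilpotent in $R$, so $R$ is generated as an $R/(x_2)$-module by $1,x_2,\ldots,x_2^{12}$. Modulo $x_2$ the relations collapse to $x_1^{15}$ and $x_v^3$ for each $v\in V$, giving $R/(x_2)\cong \Q[x_1,x_v]/(x_1^{15},x_v^3\mid v\in V)$, of dimension $15\cdot 3^{|V|}$. Therefore $R$ is finite-dimensional and $\M_\G$ is elliptic. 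The main obstacle in the plan is invoking the correct pure-model criterion from \cite{FHT2}; once this is in hand the concrete commutative-algebra step is short, because nilpotence of $x_2$ absorbs all the graph-theoretic complexity carried by the relations $d_\sigma(z_v)$.
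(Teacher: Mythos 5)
Your proof is correct and follows essentially the same route as the paper: both reduce ellipticity to the associated pure Sullivan algebra $(\Lambda W,d_\sigma)$ via \cite[Proposition 32.4]{FHT2} and then check that the even generators are nilpotent modulo the ideal generated by $d_\sigma(W^{\text{odd}})$. The only (immaterial) difference is in packaging: the paper exhibits explicit preimages, namely $d_\sigma(zx_1^2-y_2x_2^{10})=x_1^{17}$ and $d_\sigma(zx_2-y_1x_1^{12})=x_2^{13}$, and deduces $[x_v^3]^4=0$, whereas you establish $x_2^{13}\in I$ (the same identity) and then pass to the quotient by the nilpotent element $x_2$.
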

\begin{proof}
We need to prove that the cohomology of $(\Lambda W, d)$ is finite-dimensional.
Instead, we prove that the cohomology of the pure Sullivan algebra associated with $(\Lambda W, d)$
is finite-dimensional, which is an equivalent condition \cite[Proposition 32.4]{FHT2}.

The pure Sullivan algebra associated with  $(\Lambda W, d)$, denoted by $(\Lambda W, d_\sigma)$, is determined by its differential which is described by
\begin{alignat*}{3}
& d_\sigma(x_1) = 0\qquad \qquad  \qquad&d_\sigma(y_1) =&x_1^3x_2 \qquad  & &    \\
&d_\sigma(x_2)  =0                 &    d_\sigma(y_2)=&x_1^2x_2^2   &d_\sigma(z) =& x_1^{15}+x_2^{12} \\
&d_\sigma(x_v)  =0,\,  v \in V                             &d_\sigma(y_3) =&x_1x_2^3                 & {d_\sigma (z_v)} =&{x_v^3+\sum_{(v,w)\in E}x_vx_wx_2^4, \,v \in V}.
\end{alignat*}
Therefore, the cohomology of $(\Lambda W, d_\sigma)$  is finite-dimensional because
$d_\sigma (z x_1^2 - y_2 x_2^{10}) =x_1^{17} $,  $d_\sigma ( z x_2 - y_1 x_1^{12}) =x_2^{13} $, and the cohomology class $ \left[x_v^3 \right]^4 = [-\underset{(v,w)\in E}{\Sigma}x_vx_wx_2^4]^4 = 0.$ Now, the formal dimension of  $(\Lambda W, d)$ is immediately obtained by Equation (\ref{dimensiontop}).
\end{proof}

Our next step is to describe $\Hom(\M_\G,\M_\G)$. Actually, it is the most demanding task in this paper. Recall that an automorphism of $\G$ is
a permutation $\sigma $ on $V$ with $(v, w) $ in $E$ if and only if $(\sigma (v), \sigma (w))
\in E$ for every $(v, w)$ in $E$.
 The following is  a straightforward result.
\begin{lemma}\label{lem1}  Every $\sigma \in\Aut(\G)$ induces an automorphism $f_\sigma$ of $\M_\G$.
\end{lemma}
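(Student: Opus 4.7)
The plan is to write down $f_\sigma$ explicitly on generators and then verify that it commutes with the differential and is bijective. Since $W$ has two distinguished kinds of generators in $\M_\G$, namely the ``graph-independent'' ones $x_1,x_2,y_1,y_2,y_3,z$ and the ``graph-dependent'' ones $x_v,z_v$ indexed by $V$, the only natural candidate is
\[
f_\sigma(x_i)=x_i \ (i=1,2),\quad f_\sigma(y_j)=y_j\ (j=1,2,3),\quad f_\sigma(z)=z,\quad f_\sigma(x_v)=x_{\sigma(v)},\quad f_\sigma(z_v)=z_{\sigma(v)},
\]
extended as a morphism of graded commutative algebras (note that degrees match trivially because $\sigma$ preserves the indexing set of the generators of a fixed degree).

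Next, I would check that $f_\sigma d=df_\sigma$ on each generator. On $x_1,x_2,x_v$ this is immediate since they are $d$-closed. On $y_1,y_2,y_3$ and on $z$, the differential is a polynomial in $x_1,x_2$ only, and these are fixed by $f_\sigma$, so both sides agree. The only substantial case is $z_v$: on the one hand
\[
d\bigl(f_\sigma(z_v)\bigr)=d(z_{\sigma(v)})=x_{\sigma(v)}^3+\sum_{(\sigma(v),w')\in E}x_{\sigma(v)}x_{w'}x_2^4,
\]
while on the other hand
\[
f_\sigma\bigl(d(z_v)\bigr)=x_{\sigma(v)}^3+\sum_{(v,w)\in E}x_{\sigma(v)}x_{\sigma(w)}x_2^4.
\]
The equality of these two expressions is precisely where the hypothesis on $\sigma$ enters: because $\sigma\in\Aut(\G)$, the map $w\mapsto\sigma(w)$ is a bijection from the neighbors of $v$ onto the neighbors of $\sigma(v)$, so the two sums coincide after reindexing $w'=\sigma(w)$. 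This is the only genuine content of the lemma and the step I would highlight as the key point; everything else is formal.

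Finally, to conclude that $f_\sigma$ is an automorphism (rather than merely an endomorphism) of the dgca $\M_\G$, I would observe that the assignment $\sigma\mapsto f_\sigma$ is functorial: for $\sigma,\tau\in\Aut(\G)$ one checks on generators that $f_{\sigma\tau}=f_\sigma\circ f_\tau$ and $f_{\mathrm{id}}=\mathrm{id}_{\M_\G}$. Therefore $f_{\sigma^{-1}}$ provides a two-sided inverse for $f_\sigma$, so $f_\sigma\in\Aut(\M_\G)$. I do not anticipate any real obstacle here; the only thing to be careful about is making sure the correspondence between ``being a graph automorphism'' and ``commuting with $d$ on the $z_v$'' is drawn explicitly, which is exactly the content of the computation above.
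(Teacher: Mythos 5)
Your proposal is correct and takes the same approach as the paper: the paper's proof consists only of writing down the same map $f_\sigma$ on generators and leaving the verification implicit, whereas you additionally spell out the check $f_\sigma d = d f_\sigma$ (in particular the reindexing of neighbors for $z_v$, which is indeed the only place $\sigma\in\Aut(\G)$ is used) and the invertibility via $f_{\sigma^{-1}}$. No discrepancies.
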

\begin{proof}
Take $f_\sigma: \M_\G \rightarrow \M_\G $ defined by
\begin{alignat*}{2}
 f_\sigma(\omega) & =  \omega, && \qquad \, \omega \in  \{x_1, x_2, y_1, y_2, y_3, z \}\\
 f_\sigma(x_v) & = x_{\sigma(v)},&&\qquad    \,v \in V      \\
 f_\sigma(z_v) & = z_{\sigma(v)},&& \qquad  \, v \in V.
 \end{alignat*}
\end{proof}

\begin{lemma} \label{lem2} For every $f \in\Hom(\M_\G,\M_\G)$ one of the following holds.
\begin{enumerate}[label={\rm (\arabic{*})}]
\item\label{lem2-3} If $f$ is an automorphism, then there exists  $\sigma \in \Aut (\G)$ such that
 \begin{alignat*}{2}
  f(\omega) & =  f_\sigma(\omega),&&\qquad    \,\omega\in \{x_1, x_2, y_1, y_2, y_3, x_v  \mid  v \in V \}      \\
 f(z) & = f_\sigma(z)  + d(m_z) ,&& \qquad m_z  \in\M_\G^{118}\\
 f(z_v) & = f_\sigma(z_v)  + d(m_{z_v}) ,&& \qquad v \in V , \;  m_{z_v} \in \M_\G^{118}.
\end{alignat*}
\item\label{lem2-2} If $f$ is not an automorphism, then there exist $s \in\{0,1\}$ and $f_s \in\Hom(\M_\G,\M_\G)$ defined by
\begin{alignat*}{2}
f_s(\omega) & =  s\omega, && \qquad    \,\omega \in  \{x_1, x_2, y_1, y_2, y_3, z \}\\
f_s(x_v)  & =  0,&&\qquad    \,v \in V     \\
 f_s(z_v) & = 0,&& \qquad    \,v \in V
 \end{alignat*}
such that
\begin{alignat*}{2}
  f(\omega) & =  f_s(\omega),&&\qquad    \,\omega \in  \{x_1, x_2, y_1, y_2, y_3, x_v  \mid  v \in V \}  \\
  f(z) & = f_s(z)  + d(m_z) ,&&\qquad m_z  \in\M_\G^{118} \\
  f(z_v) & = f_s(z_v)  + d(m_{z_v}) ,&& \qquad v \in V, \;  m_{z_v} \in \M_\G^{118}.
   \end{alignat*}
\end{enumerate}
\end{lemma}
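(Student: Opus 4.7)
The plan is a degree-by-degree analysis of $f$ on the generators of $\M_\G$, exploiting the cocycle constraint $d\circ f=f\circ d$ at each step.

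First I handle the low-degree generators. Because $\M_\G^{8}=\Q x_1$ and $\M_\G^{10}=\Q x_2$, one has $f(x_1)=\lambda x_1$ and $f(x_2)=\mu x_2$ for some $\lambda,\mu\in\Q$. In the odd degrees $33, 35, 37$ the relevant graded pieces are one-dimensional, generated by $y_1, y_2, y_3$, so $df(y_i)=f(dy_i)$ yields $f(y_1)=\lambda^3\mu\, y_1$, $f(y_2)=\lambda^2\mu^2\, y_2$, $f(y_3)=\lambda\mu^3\, y_3$. To pin $(\lambda,\mu)$ down, write $f(z)=c_z z+\sum_v c_{z,v} z_v+\beta$ with $\beta$ supported on $y_i\cdot(\text{polynomial in }x_1,x_2,x_v)$. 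Expanding $df(z)=f(dz)$ and matching the coefficients of $y_1y_2x_1^4x_2^2$, $x_1^{15}$, and $x_2^{12}$ (each of which, by a degree check, can only be produced by the $c_z\,dz$ piece; neither $d\beta$ nor $c_{z,v}\,dz_v$ contributes) yields $c_z=\lambda^9\mu^5=\lambda^{15}=\mu^{12}$. Over $\Q$ this forces $(\lambda,\mu)\in\{(0,0),(1,1)\}$; set $s:=\lambda=\mu\in\{0,1\}$.

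The crux is the analysis of $f(x_v)$. Since $\M_\G^{40}=\Q x_1^5\oplus\Q x_2^4\oplus\bigoplus_{v\in V}\Q x_v$, write $f(x_v)=a_v x_1^5+b_v x_2^4+\sum_w c_{v,w}\,x_w$. Decomposing $f(z_v)=c_v^{(z)}z+\sum_u c_v^{(z_u)}z_u+\beta_v$ and expanding $df(z_v)=f(dz_v)=f(x_v)^3+s^4\sum_{(v,w)\in E}f(x_v)f(x_w)x_2^4$, I match monomials in $\M_\G^{120}$. The absence of $y$-containing monomials on the left forces $c_v^{(z)}=0$; the coefficients of $x_{w_1}x_{w_2}x_{w_3}$ for distinct $w_i$ and of $x_{w_1}^2 x_{w_2}$ with $w_1\neq w_2$ (which appear in neither the edge term nor any $c_v^{(z_u)}\,dz_u$ term) force the row $(c_{v,w})_w$ to have at most one nonzero entry $t_v$, at some vertex $\tau(v)$; the coefficient of $x_1^{15}$ gives $a_v=0$; and comparing the coefficients of $x_2^{12}$ and $x_2^8 x_{\tau(v)}$ (neither arising from $d\beta_v$ by a direct degree count) forces $b_v=0$ whenever $t_v\neq 0$.

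The dichotomy then follows from the edge constraints. Matching the coefficient of $x_{\tau(v)}^3$ gives $t_v^3=c_v^{(z_{\tau(v)})}$, and the coefficient of $x_{\tau(v)}x_{\tau(w)}x_2^4$ for $(v,w)\in E$ gives $s^4 t_v t_w=t_v^3\cdot\mathbf{1}_{(\tau(v),\tau(w))\in E}$. Running these equations simultaneously over the connected graph $\G$ and invoking injectivity of $\tau$ where it is defined shows that either all $t_v=0$, in which case a simpler analysis of the residual $b_v$-equations forces $f(x_v)=0$ for every $v\in V$, or else $\tau$ is a bijection $V\to V$ preserving $E$, hence a graph automorphism $\sigma\in\Aut(\G)$, with $t_v=s=1$ throughout. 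The claims about $f(z)$ and $f(z_v)$ are then immediate: once $f$ is determined on every lower generator, each of $f(z), f(z_v)$ is fixed up to a cocycle of degree $119$, and the same cohomological reasoning identifies those cocycles as coboundaries, producing the stated $d(m_z)$ and $d(m_{z_v})$ corrections. The main obstacle is the third paragraph: one must carefully track which monomials can or cannot lie in $d\M_\G^{119}$, invoking the $dz$-relation to handle $x_1^{15}$-type contributions and all of the $dz_v$-relations simultaneously, and exploit the connectedness of $\G$ to rule out hybrid configurations mixing trivial and nontrivial images.
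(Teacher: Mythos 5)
Your overall strategy is the one the paper itself follows: express $f$ on each generator using the small dimensions of the relevant graded pieces, impose $d\circ f=f\circ d$ monomial by monomial, and invoke connectedness of $\G$ at the end. The first two paragraphs are sound: the graded pieces in degrees $8,10,33,35,37,40$ are exactly as you describe, the three coefficient comparisons you select for $f(z)$ do isolate $c_z=\lambda^9\mu^5=\lambda^{15}=\mu^{12}$ (the paper uses two further comparisons, but they yield the same relation $\lambda^9\mu^5$), and over $\Q$ this system has only the solutions $(0,0)$ and $(1,1)$. Likewise your eliminations of $c_v^{(z)}$, of $a_v$, and of all but one entry of the row $(c_{v,w})_w$ reproduce the paper's argument, and your closing remarks on $f(z)$ and $f(z_v)$ being corrected by explicit coboundaries of the form $d(y_iy_jP)$ are recoverable from the sum-zero conditions on the $\alpha,\beta,\gamma$ coefficients.

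The genuine gap is in your final case split, at the clause ``either all $t_v=0$, in which case a simpler analysis of the residual $b_v$-equations forces $f(x_v)=0$.'' That analysis does not force it. Once $c_v^{(z)}=a_v=0$ and all $t_v=0$, matching the $x_u^3$ and $x_ux_rx_2^4$ monomials kills every $c_v^{(z_u)}$, the coboundary $d\beta_v$ is divisible by $x_1$ in every monomial, and the \emph{only} surviving constraint on the $b_v$ is the coefficient of $x_2^{12}$, namely $b_v\bigl(b_v^2+s^4\sum_{(v,w)\in E}b_w\bigr)=0$. For $s=1$ this has nonzero solutions: on a single edge $\{v,w\}$ take $b_v=b_w=-1$ (more generally $b_v=-k$ on a connected $k$-regular graph), and one checks directly that setting $f$ to be the identity on $x_1,x_2,y_1,y_2,y_3,z$, $f(x_v)=-x_2^4$, $f(z_v)=0$ defines a cdga endomorphism ($f(dz_v)=-x_2^{12}+x_2^{12}=0$) which is not an automorphism and for which $f(x_v)\neq 0$. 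So the conclusion you want in this branch cannot be extracted from the equations you have written down; some further input is needed, and your proposal does not supply it. You should be aware that this is exactly the delicate point in the paper's own proof as well: there the vanishing $a_2(v)=0$ is deduced from the absence of $x_w^2x_2^4$-summands in $df(z_v)$, a deduction that is only conclusive when the coefficient $a(v,\sigma(v))$ is nonzero, i.e.\ precisely outside the branch in question. A secondary, smaller gap: you assert that $\tau$ is injective ``where it is defined,'' but injectivity is not automatic from the displayed equations; the paper derives it by showing $\sigma$ is one-to-one on each neighbourhood (comparing multiplicities of the summands $x_{\sigma(v)}x_{\sigma(w)}x_2^4$) and then appeals to Ne{\v s}et{\v r}il's lemma for finite connected graphs, a step your sketch omits.
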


\begin{proof} For $f  \in\Hom(\M_\G,\M_\G)$, by degrees reasoning we write
\begin{equation}\label{description-of-f}
\begin{alignedat}{2}
f(x_1) =&  a_1x_1 \\
f(x_2) =& a_2x_2  \\
 f(y_1) =&  b_1y_1 \\
  f(y_2) = &      b_2y_2  \\
   f(y_3) = & b_3y_3  \\
f(x_v) = & \sum_{w\in V} a(v,w)x_w +a_1(v)x_1^5 + a_2(v) x_2^4, \; \;  v \in V\\
f(z) = & cz +\sum_{w\in V} c(w)z_w \\
& +\alpha_1y_1x_1^2x_2^7 +\beta_1y_2x_1^3x_2^6+\gamma_1y_3x_1^4x_2^5\\
 &+\alpha_2y_1x_1^7x_2^3 +\beta_2y_2x_1^8x_2^2+\gamma_2y_3x_1^9x_2\\
 &+ \sum_{w\in V}x_w\big(\alpha_3(w)y_1x_1^2x_2^3 +\beta_3(w)y_2x_1^3x_2^2+\gamma_3(w)y_3x_1^4x_2\big)\\
 f(z_v) =& e(v)z + \sum_{w\in V} c(v,w)z_w \\
& +\alpha_1(v)y_1x_1^2x_2^7 +\beta_1(v)y_2x_1^3x_2^6+\gamma_1(v)y_3x_1^4x_2^5\\
& +\alpha_2(v)y_1x_1^7x_2^3 +\beta_2(v)y_2x_1^8x_2^2+\gamma_2(v)y_3x_1^9x_2\\
& + \sum_{w\in V}x_w\big(\alpha_3(v,w)y_1x_1^2x_2^3 +\beta_3(v,w)y_2x_1^3x_2^2+\gamma_3(v,w)y_3x_1^4x_2\big), \; \;  v \in V.\\
\end{alignedat}
\end{equation}

Since $df(y_i)=f(dy_i)$, for $i=1,2,3$ we obtain
\begin{alignat}{2}\label{arklupfirst}
b_1=&a_1^3a_2 \qquad
b_2=&a_1^2a_2^2\qquad
b_3=&a_1a_2^3.
\end{alignat}

Since $df(z)=f(dz)$, the two expressions below must be equal
\begin{alignat*}{2}
df(z)=&c(x_1^4x_2^2y_1y_2-x_1^5x_2y_1y_3+x_1^6y_2y_3+x_1^{15}+x_2^{12})\\
&+\sum_{w\in V} c(w)\big(x_w^3 + \sum_{(w,u)\in E}x_wx_ux_2^4\big)\\
&+(\alpha_1+\beta_1+\gamma_1)x_1^5x_2^8\\
&+ (\alpha_2+\beta_2+\gamma_2)x_1^{10}x_2^4\\
&+\sum_{w\in V}(\alpha_3(w)+\beta_3(w)+\gamma_3(w))x_wx_1^5x_2^4, \\
f(dz)=& b_1b_2a_1^4a_2^2y_1y_2x_1^4x_2^2-b_1b_3a_1^5a_2y_1y_3x_1^5x_2\\
&+b_2b_3a_1^6y_2y_3x_1^6+a_1^{15}x_1^{15}+a_2^{12}x_2^{12}.
\end{alignat*}
Hence, we obtain
\begin{equation}\label{arklupsecond}
\begin{alignedat}{2}
c=& a_1^{15}=a_2^{12}\\
c=&b_1b_2a_1^4a_2^2\\
c=&b_1b_3a_1^5a_2\\
c=&b_2b_3a_1^6\\
\end{alignedat}
\end{equation}
\begin{alignat*}{2}
c(w)=& 0, \qquad\text{ for all }w\in V\\
\alpha_i+\beta_i+\gamma_i=&0, \qquad\, i=1,2\\
\alpha_3(w)+\beta_3(w)+\gamma_3(w)=&0,  \qquad \text{ for all }w\in V.
\end{alignat*}

Equations (\ref{arklupfirst}) and (\ref{arklupsecond}) are
the same as in \cite[Example 5.1]{AL2}. Therefore
 $$a_1=a_2=b_1=b_2=b_3=c=s,\text{ with } s\in\{0,1\}.$$
This yields to
\begin{alignat*}{3}
f(x_1) =&sx_1, \qquad \qquad &f(y_1) =& sy_1,  \qquad   \qquad &  f(z) &=   s z + d\big(\beta_1y_1y_2x_2^5+\gamma_1y_1y_3x_1x_2^4\big)  \\
f(x_2) =&sx_2,                  &f(y_2) = &sy_2,           & {}     & + d\big(\beta_2y_1y_2x_1^5x_2+\gamma_2y_1y_3x_1^6\big)  \\
&                            &f(y_3) = &sy_3,                 & {} &+ \sum_{w\in V}d\big(\beta_3(w)y_1y_2x_wx_2+\gamma_3(w)y_1y_3x_wx_1\big).\\
\end{alignat*}

Assume first $s=1$. Since $df(z_v)=f(dz_v),$ the following two expressions must be equal
\begin{alignat}{2}
df(z_v)=& e(v)(y_1y_2x_1^4x_2^4-y_1y_3x_1^5x_2+y_2y_3x_1^6+x_1^{15}+x_2^{12}) \label{dfz}\\
\notag
&+ \sum_{w\in V} c(v,w)\big(x_w^3 + \sum_{(w,u)\in E}x_wx_ux_2^4\big)\\
\notag
&+(\alpha_1(v)+\beta_1(v)+\gamma_1(v))x_1^5x_2^8\\
\notag
&+ (\alpha_2(v)+\beta_2(v)+\gamma_2(v))x_1^{10}x_2^4\\
\notag
&+\sum_{w\in V}(\alpha_3(v,w)+\beta_3(v,w)+\gamma_3(v,w))x_wx_1^5x_2^4 \label{fdz}\\
f(dz_v)&= \big(\sum_{w\in V} a(v,w)x_w +a_1(v)x_1^5 + a_2(v) x_2^4\big)^3\\
\notag
&+\sum_{(v,r)\in E}\big(\sum_{w\in V} a(v,w)x_w +a_1(v)x_1^5 + a_2(v) x_2^4\big)
\big(\sum_{u\in V} a(r,u)x_u +a_1(r)x_1^5 + a_2(r) x_2^4\big)x_2^4.
\end{alignat}
Close examination of these equations yields the
following remarks. Firstly, there is no summand of type
$x_vx_wx_u$, $v\ne w\ne u\ne v$ in (\ref{dfz}), and therefore
there is at most two non trivial coefficients $a(v,w)$ in
(\ref{fdz}). As for  (\ref{dfz})  there is no summand of type
$x_w^2x_r$, there is only one non trivial coefficient $a(v,w)$
in (\ref{fdz}). Hence there is, at most, a unique summand $x_w^3$
in (\ref{fdz}) and a unique non trivial coefficient $c(v,w)$ in
(\ref{dfz}). Secondly, comparing the coefficients of
$y_1y_2x_1^4x_2^4$ and $x_1^{15}$, we obtain
\begin{equation*}
\begin{array}{c}
e(v)=a_1(v)=0.\\
\end{array}
\end{equation*}
Now, there is no term of type $x_w^2x_2^4$  in (\ref{dfz}) (the graph does not contain any loop) so we deduce that
\begin{equation*}
\begin{array}{c}
a_2(v)=0.\\
\end{array}
\end{equation*}
Finally, comparing the coefficients of $x_1^5x_2^8$,
$x_1^{10}x_2^4$ and $x_wx_1^5x_2^4,$ we obtain that
\begin{alignat*}{1}
\alpha_i(v)+\beta_i(v)+\gamma_i(v) &=0, \text{ for }i=1,2\\
\alpha_3(v,w)+\beta_3(v,w)+\gamma_3(v,w)&=0.\\
\end{alignat*}

Summarizing
\begin{alignat*}{1}
%\begin{array}{rl}
f(x_v) =& a(v,\sigma(v)) x_{\sigma(v)}\\
f(z_v)= &c(v,\sigma(v)) z_{\sigma(v)}\\
& +d\big(\beta_1(v)x_2^5y_1y_2+\gamma_1(v)x_1x_2^4y_1y_3\big)\\
& +d\big(\beta_2(v)x_1^5x_2y_1y_2+\gamma_2(v)x_1^6y_1y_3\big)\\
& +\sum_{w\in V} d\big(\beta_3(v,w)x_wx_2y_1y_2+\gamma_3(v,w)x_wx_1y_1y_3\big)
%\end{array}
\end{alignat*}
where $\sigma$ is a self-map of $V$ and
\begin{alignat*}{1}
c(v,\sigma(v)) = &a\big(v,\sigma(v)\big)^3\text{ for all }v\in V\\
c(v,\sigma(v)) = &a\big(v,\sigma(v)\big)a\big(w,\sigma(w)\big)\text{ for all }(v,w)\in E.
\end{alignat*}
Therefore $a(v,\sigma(v))^2=a(w,\sigma(w))$ if $(v,w)\in E$. Since $\G$ is not a directed graph (which implies that if $(v,w)\in E$ then $(w,v)\in E$ too)
we deduce that $a(w,\sigma(w))^2=a(v,\sigma(v))$, and hence $a(v,\sigma(v))^4=a(v,\sigma(v))$. Moreover, since $\G$ is connected, one of the following holds
\begin{enumerate}[label=\emph{\roman*)}]
\item $a(v,\sigma(v))=c(v,\sigma(v))=0$ for all $v\in V$, which proves Lemma \ref{lem2}.\ref{lem2-2} for $s=1$.
\item $a(v,\sigma(v))=c(v,\sigma(v))=1$ for all $v\in V$. Then,
\begin{alignat*}{2}
f(x_v)=& x_{\sigma(v)}\\
f(z_v)=& z_{\sigma(v)}\\
& +d\big(\beta_1(v)x_2^5y_1y_2+\gamma_1(v)x_1x_2^4y_1y_3\big)\\
& +d\big(\beta_2(v)x_1^5x_2y_1y_2+\gamma_2(v)x_1^6y_1y_3\big)\\
& +\sum_{w\in V} d\big(\beta_3(v,w)x_wx_2y_1y_2+\gamma_3(v,w)x_wx_1y_1y_3\big).
\end{alignat*}
The self-map  $\sigma\colon V\rightarrow V$ is, in fact, an element in $\Aut(\G)$. We first show that $\sigma\in\Hom(\G,\G)$, that is, $(v,w)\in E$ if and only if $(\sigma(v),\sigma(w))\in E$. Indeed,  $(v,w)\in E$ if and only if there is a summand $x_vx_wx_2^4$ in $d(z_v)$, hence, if and
only if there  is a summand $x_{\sigma{(v)}}x_{\sigma(w)}x_2^4$ in
$f(d z_v)=df(z_v)= d(z_{\sigma(v)})$, that is,  if and only if
$(\sigma(v),\sigma(w))\in E$. Now, since for every $v\in V$, $f(dz_v)=d(z_{\sigma(v)})$,  $\sigma$ is one-to-one on the neighborhood of every vertex. Therefore $\sigma\in\Aut(\G)$ \cite[Lemma 1]{Nes}, which proves Lemma \ref{lem2}.\ref{lem2-3}.
\end{enumerate}

Assume now that $s=0$. Then
\begin{equation}\label{fdzv-2}
f(dz_v)= \big(\sum_{w\in V} a(v,w)x_w +a_1(v)x_1^5 + a_2(v) x_2^4\big)^3.
\end{equation}
Since $df(z_v)=f(dz_v),$ an argument similar to the one above, comparing \eqref{dfz} and \eqref{fdzv-2}, yields to
\begin{alignat*}{2}
f(x_v)=& 0\\
f(z_v)=& 0\\
 + &d\big(\beta_1(v)x_2^5y_1y_2+\gamma_1(v)x_1x_2^4y_1y_3\big)\\
 + &d\big(\beta_2(v)x_1^5x_2y_1y_2+\gamma_2(v)x_1^6y_1y_3\big)\\
 + &\sum_{w\in V} d\big(\beta_3(v,w)x_wx_2y_1y_2+\gamma_3(v,w)x_wx_1y_1y_3\big),
\end{alignat*}
which proves Lemma \ref{lem2}.\ref{lem2-2} for $s=0$.
\end{proof}

As we mentioned in Section \ref{intro},  isomorphism classes of minimal Sullivan algebras whose cohomology is 1-connected and of finite type are in bijection with rational homotopy types for simply connected spaces with rational homology of finite type. Also, the homotopy classes of morphisms of the corresponding minimal Sullivan algebras are in bijection with the homotopy classes of maps between the corresponding rational homotopy types.
Recall that two morphisms from a Sullivan algebra to an arbitrary commutative cochain algebra, $\phi_o, \phi_1 :  (\Lambda W, d) \rightarrow  (A, d)$ are homotopic if there exists
$H: (\Lambda W, d) \rightarrow ( A, d) \otimes (\Lambda (t, dt), d )$
such that $(id \cdot \epsilon_i) H= \phi_i, i = 0,1$, where $\deg t  = 0, \ \deg dt = 1 $, and  $d$ is the differential sending $t \mapsto dt$.
The augmentations $ \epsilon_0, \ \epsilon_1: \Lambda (t, dt) \rightarrow \mathbb Q \ \text{are  defined by} \ \epsilon_0(t) = 0, \ \epsilon_1(t) = 1 $.

\begin{lemma}\label{lem3}  For any $f \in\Hom(\M_\G,\M_\G)$, one of the following holds.
\begin{enumerate}[label={\rm (\arabic{*})}]
\item There exists $f_\sigma$ automorphism, as in {\rm Lemma} {\rm\ref{lem2}.\ref{lem2-3}}, such that $f$ is homotopic to  $ f_\sigma$.
\item There exists  $f_s$ as in {\rm Lemma} {\rm\ref{lem2}.\ref{lem2-2}}, such that $ f $ is homotopic to $f_s$.
\end{enumerate}
\end{lemma}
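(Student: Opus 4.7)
The main input is Lemma \ref{lem2}: given $f\in\Hom(\M_\G,\M_\G)$, there is a preferred map $g\in\{f_\sigma,f_s\}$ that agrees with $f$ on $\{x_1,x_2,y_1,y_2,y_3\}\cup\{x_v\mid v\in V\}$, and on $z,z_v$ the discrepancy is exact: $f(z)-g(z)=d(m_z)$ and $f(z_v)-g(z_v)=d(m_{z_v})$ with $m_z,m_{z_v}\in\M_\G^{118}$. The plan is to upgrade this cohomology-level agreement to an actual homotopy by writing down an explicit DGA morphism $H\colon\M_\G\to\M_\G\otimes\Lambda(t,dt)$ with $(\mathrm{id}\otimes\epsilon_0)H=f$ and $(\mathrm{id}\otimes\epsilon_1)H=g$, treating cases \ref{lem2}.\ref{lem2-3} and \ref{lem2}.\ref{lem2-2} uniformly.

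Concretely, I would set $H(\omega)=f(\omega)\otimes 1$ for $\omega$ in the agreement set, and
\[
H(z)=g(z)\otimes 1+d\bigl(m_z\otimes(1-t)\bigr),\qquad H(z_v)=g(z_v)\otimes 1+d\bigl(m_{z_v}\otimes(1-t)\bigr),
\]
extended multiplicatively. Since $m_z$ and $m_{z_v}$ have even degree $118$, the Koszul rule gives $d\bigl(m_z\otimes(1-t)\bigr)=d(m_z)\otimes(1-t)-m_z\otimes dt$, and similarly for $m_{z_v}$. Evaluation at $t=1,\,dt=0$ kills both correction terms, yielding $g$, while evaluation at $t=0,\,dt=0$ reduces them to $d(m_z)$ and $d(m_{z_v})$, so that $(\mathrm{id}\otimes\epsilon_0)H(z)=g(z)+d(m_z)=f(z)$ and likewise for each $z_v$.

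It remains to verify that $dH=Hd$. On the agreement-set generators this is immediate, since both $f$ and $g$ are DGA morphisms and the differentials $dx_1,dx_2,dy_j,dx_v$ land in the sub-CDGA generated by $x_1,x_2$, on which $H=f\otimes 1$. For $z$ and each $z_v$, the key observation is that $dz$ and $dz_v$ involve only generators in the agreement set, so $H(dz)=g(dz)\otimes 1$ and $H(dz_v)=g(dz_v)\otimes 1$. On the other hand, $dH(z)=d\bigl(g(z)\otimes 1\bigr)+d^2\bigl(m_z\otimes(1-t)\bigr)=g(dz)\otimes 1$, and identically for $z_v$.

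I do not foresee a serious obstacle: the deep work was already done in Lemma \ref{lem2}, and the present step is essentially the general principle that two DGA maps which agree on the lower-degree generators and differ by coboundaries on the top-degree ones are homotopic via an explicit $H$ linear in $1-t$. The only delicate point is sign bookkeeping under the Koszul rule on $\M_\G\otimes\Lambda(t,dt)$, which is routine.
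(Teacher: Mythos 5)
Your proof is correct and takes the same route as the paper: the paper simply states that the lemma ``follows directly from Lemma \ref{lem2}'', the implicit point being exactly the one you make explicit, namely that the discrepancies on $z$ and the $z_v$ are coboundaries and the standard homotopy linear in $1-t$ (which is well defined on generators since $dz$ and $dz_v$ lie in the subalgebra where $f$ and $g$ agree) interpolates between $f$ and $f_\sigma$ or $f_s$. Your explicit verification of $dH=Hd$ and of the evaluations at $\epsilon_0,\epsilon_1$ is accurate, including the sign from the Koszul rule in even degree $118$.
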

\begin{proof}
%Let $\alpha$ denote either $\sigma$ or $s$.  The existence of such $f_\alpha$  follows directly from Lemma \ref{lem2}. Define the homotopy $H_\alpha: \M_\G   \rightarrow \M_\G \otimes
%\big(\Lambda (t, dt), d \big)$ by  $H_\alpha(w)=  f_{\alpha}(w)\otimes 1$ for generators of degree $ < 119$ and
%$H_\alpha(w)= f_{\alpha }(w) \otimes 1 + (f - f_\alpha) (w) \otimes t +  m_{w} \otimes dt$  for generators of degree $119$,
% $m_w$ being the element of degree 118 in Lemma \ref{lem2}.
Follows directly from Lemma \ref{lem2}.
\end{proof}

Gathering  Lemma \ref{lem1}, Lemma \ref{lem2}, and Lemma \ref{lem3},  we have
proved the following result from which we deduce Theorem \ref{thmgraph} as a corollary.
\begin{theorem}\label{sullgraph}
Let  $\G$ be a finite connected graph with more than one vertex. Then, there
exists an elliptic minimal Sullivan algebra $\M_\G$ such that the monoid of homotopy classes of self-maps is
$$[\M_\G,\M_\G] \cong \Aut (\G) \sqcup \{f_s \colon s=0,1\} .$$ Therefore $\Aut (\G)\cong
\E (\M_\G)$.
\end{theorem}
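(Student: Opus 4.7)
The plan is to bundle together the three preceding lemmas. Ellipticity of $\M_\G$ is handed to us directly by Lemma~\ref{elliptic}, so the remaining work is the identification $[\M_\G,\M_\G] \cong \Aut(\G) \sqcup \{f_0, f_1\}$ and the conclusion $\E(\M_\G) \cong \Aut(\G)$. By Lemma~\ref{lem3}, every homotopy class of self-maps is represented by some $f_\sigma$ with $\sigma \in \Aut(\G)$ or by $f_s$ with $s\in\{0,1\}$, so what has to be checked is that these representatives give pairwise distinct homotopy classes, and that the only homotopy equivalences among them are the $f_\sigma$.

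I would attack both via the induced maps on $H^{40}(\M_\G)$. The degree-$40$ generators $x_v$ are cocycles, and a quick degree count shows that $\Lambda W$ has no elements at all in degree $39$ (the only generators below $40$ are $x_1,x_2$ of even degrees $8,10$ and $y_1,y_2,y_3$ of odd degrees $33,35,37$, and $8a+10b=6,4,2$ have no non-negative solutions). Hence no $x_v$ is exact, and $\{[x_v]\}_{v\in V}$ is a linearly independent family of non-zero classes in $H^{40}(\M_\G)$.

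With this in hand, $[f_\sigma]=[f_\tau]$ forces $[x_{\sigma(v)}]=[x_{\tau(v)}]$ for every $v$, so $\sigma=\tau$; similarly $f_\sigma$ cannot be homotopic to $f_s$, because $[f_s(x_v)]=0\neq [x_{\sigma(v)}]=[f_\sigma(x_v)]$; and $f_0\not\simeq f_1$ is witnessed already by their values on $[x_1]$. A direct check on generators yields $f_\sigma \circ f_\tau = f_{\sigma\tau}$, so $\sigma\mapsto[f_\sigma]$ is a group monomorphism whose image lies in $\E(\M_\G)$ (by Lemma~\ref{lem1}). Finally, neither $f_0$ nor $f_1$ can represent a homotopy equivalence, since each of them annihilates every $x_v$ and therefore fails to induce an automorphism on $H^{40}(\M_\G)$. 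This pins $\E(\M_\G)$ down to the image of $\Aut(\G)$, producing the desired isomorphism.

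The main obstacle, essentially the only nontrivial step, has already been subsumed into Lemma~\ref{lem2}: the exhaustive degree-by-degree analysis of an arbitrary $f\in\Hom(\M_\G,\M_\G)$ that forces its coefficients into either the $f_\sigma$ or the $f_s$ shape (modulo coboundaries). Once that normal form is available, the theorem collapses to the routine cohomological bookkeeping sketched above.
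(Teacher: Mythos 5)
Your proposal is correct and follows essentially the same route as the paper, which simply gathers Lemmas \ref{lem1}, \ref{lem2} and \ref{lem3} (together with the ellipticity from Lemma \ref{elliptic}) to conclude. The only addition is your explicit verification, via the induced maps on $H^{40}(\M_\G)$ and the observation that $\M_\G^{39}=0$, that the representatives $f_\sigma$ and $f_s$ lie in pairwise distinct homotopy classes and that the $f_s$ are not equivalences --- a detail the paper leaves implicit but which your argument handles correctly.
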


We finish this section with some comments on the properties of
the construction above. The following,  together with Theorem \ref{Frucht},  justifies
the infinitely many rational spaces $X$ from Theorem
{\ref{thmgroup}} (see also Remark \ref{many}).

\begin{proposition}\label{rem}
Let $\G_1=(V,E)$ and $\G_2=(V',E')$ be non isomorphic graphs. Then, $\M_{\G_1}$ and $\M_{\G_2}$  are non isomorphic minimal Sullivan algebras.
 \end{proposition}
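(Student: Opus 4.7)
The plan is to derive a graph isomorphism $\sigma\colon \mathcal{G}_1 \to \mathcal{G}_2$ from any putative algebra isomorphism $\phi\colon \mathcal{M}_{\mathcal{G}_1} \to \mathcal{M}_{\mathcal{G}_2}$, thereby contradicting the hypothesis. The key observation is that both algebras share a common core $\Lambda(x_1, x_2, y_1, y_2, y_3, z)$ with identical degrees and differentials, and differ only in the graph-dependent generators $\{x_v, z_v\}_{v \in V}$ versus $\{x_{v'}, z_{v'}\}_{v' \in V'}$, whose differentials encode the edge sets. In particular, the formal dimension formula (\ref{dimensiontop}) together with Lemma \ref{elliptic} already forces $|V| = |V'|$ as soon as any algebra isomorphism exists.

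First I would revisit the proof of Lemma \ref{lem2}, noting that it never really uses that $f$ is a self-map: only the shape of the source and the target enter. Rerunning the degree-based decomposition of equation (\ref{description-of-f}) for $\phi$ in place of $f$, exactly the same systems (\ref{arklupfirst}) and (\ref{arklupsecond}) appear and force $\phi(x_1) = sx_1$, $\phi(x_2) = sx_2$, $\phi(y_i) = sy_i$, and $\phi(z) \equiv sz$ modulo exact terms, with $s \in \{0,1\}$. Because $\phi$ is an isomorphism, $s = 0$ is impossible (else the cohomology class $[x_1^{15}]$ would be annihilated), so $s = 1$. The analysis of the $s = 1$ case in Lemma \ref{lem2} then gives $\phi(x_v) = a(v,\sigma(v))\, x_{\sigma(v)}$ for some map $\sigma\colon V \to V'$, and the connectedness argument combined with the surjectivity of $\phi$ (every $x_{v'}$ must admit a preimage) rules out the all-zero case and forces $a(v,\sigma(v)) = 1$ for every $v \in V$.

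The final step is to check that $\sigma$ is a bijection that preserves edges. Surjectivity follows from $\phi$ being onto, and since $|V| = |V'|$ we obtain bijectivity automatically. The edge-preservation condition is extracted exactly as in Lemma \ref{lem2}: comparing the coefficient of each monomial $x_{v'}x_{w'}x_2^4$ in $d\phi(z_v) = \phi(dz_v)$ against the corresponding terms in $\mathcal{M}_{\mathcal{G}_2}$ shows that $(v,w) \in E$ iff $(\sigma(v),\sigma(w)) \in E'$, so \cite[Lemma~1]{Nes} applies and yields an isomorphism $\sigma\colon \mathcal{G}_1 \to \mathcal{G}_2$, contradicting the hypothesis. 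The only obstacle is bookkeeping: one must verify that each step of Lemma \ref{lem2} survives the passage to the asymmetric setting of morphisms between two distinct algebras of the same shape. Since that lemma's proof never used a common indexing set beyond its role in the differentials, the transfer is essentially automatic and requires no new calculations.
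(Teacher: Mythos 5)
Your proposal is correct and follows essentially the same route as the paper: the paper likewise deduces $|V|=|V'|$ from a dimension count (it uses $\dim\M_\G^{40}=|V|+2$ rather than the formal dimension, an immaterial difference) and then reruns the proof of Lemma \ref{lem2} on the isomorphism to extract a full bijection $\sigma\colon V\to V'$, hence a graph isomorphism, contradicting the hypothesis. Your additional remarks on why $s=1$ and why the all-zero case is excluded are correct elaborations of what the paper leaves implicit.
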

\begin{proof}
Assume that $\M_{ \G_1}$ and $\M_{\G_2}$ are
isomorphic, and let $f$ denote such an isomorphism. Since
$$\vert V \vert +2=\dim \M_{ \G_1}^{40}=\dim\M_{ \G_2}^{40}=\vert V' \vert+2,$$ we
have $\vert V \vert=\vert V' \vert$ and, without loss of generality, we may assume
that $\G_1$ and $\G_2$ have the same set of vertices $V$.
Then, the isomorphism $f$ is described by the system of equations
(\ref{description-of-f}).  Reproducing the same steps as in the
proof of Lemma \ref{lem2}, we  get that $f$ is homotopic to  $f_\sigma$,
where $\sigma$ is a permutation of $V$ such that $(v,w)\in E$ if
and only if $(\sigma(v),\sigma(w))\in E'$. That is, $\sigma$
induces an isomorphism between $\G_1$ and $\G_2$.
\end{proof}

The construction of $\M$ is functorial when considering the appropriate category of graphs. Recall that given $\G_1=(V,E)$ and $\G_2=(V',E')$, a morphism $\sigma\colon\G_1\rightarrow\G_2$ is said to be full if for every pair of vertices $v,w\in V$, $(v,w)\in E$ if and only if $(\sigma(v),\sigma(w))\in E'$.

\begin{remark}\label{contrafunctor}
Let ${\G}raph_{fm}$ be the category whose objects are finite graphs with more than one vertex, and morphisms are full graph monomorphisms. Then, the construction $\M$ provides a contravariant faithful functor which is injective on objects (an embedding) from ${\G}raph_{fm}$ to the category of Sullivan algebras.
Let $\G_1=(V,E)$ and $\G_2=(V',E')$ be  graphs, and $\M_{\G_1}$
and $\M_{\G_2}$ be the associated minimal Sullivan algebras provided
by {\rm Theorem \ref{sullgraph}}. If $\sigma\colon\G_1\rightarrow\G_2$ in ${\G}raph_{fm}$, then there is a morphism of minimal Sullivan algebras
$\M(\sigma)\colon\M_{\G_2}\to\M_{\G_1}$ given by
\begin{equation*}\label{functor}
\begin{array}{rl}
\M(\sigma)(x_1) =& x_1\\ \M(\sigma)(x_2) =& x_2\\ \M(\sigma)(y_1) =& y_1\\ \M(\sigma)(y_2) =& y_2\\
\M(\sigma)(y_3) =& y_3\\ \M(\sigma)(x_{v'})=&  \begin{cases} x_v&\text{ if }\sigma(v)=v'\\
0&\text{ otherwise}\end{cases}\\ \M(\sigma)(z) = & z \\ \M(\sigma)(z_{v'})= &
\begin{cases} z_v&\text{ if }\sigma(v)=v'\\
0&\text{ otherwise.}\end{cases}
\end{array}
\end{equation*}
If $\G_1=\G_2$, then $\sigma\in\Aut(\G_1)$ and $\M(\sigma)=f_{\sigma^{-1}}$ as described in {\rm Lemma \ref{lem1}}.
\end{remark}

We finish this section illustrating other possible constructions of minimal Sullivan algebras for a given graph $\G$.

\begin{remark}\label{many}
The construction of $\M_\G$ is not unique, that is, given a finite
connected graph $\G$ with more than one vertex, there exist infinitely many non isomorphic minimal Sullivan
algebras whose group of self-homotopy equivalences is
isomorphic to $\Aut(\G)$. In fact, given a non trivial vector $(u_1,u_2)\in\Q^2$,
it is possible to construct a minimal Sullivan algebra $(\M_{(u_1,u_2)},  d_{(u_1,u_2)})$ having the same generators as $\M_\G$ and, $d_{(u_1,u_2)}$
equals $d$ in every generator but in $${
d_{(u_1,u_2)}}(z_v)=x_v^3+\sum_{(v,w)\in E}x_vx_w(u_1x_1^5 + u_2x_2^4).$$

\end{remark}

\section{From Graphs to Inflexible Manifolds}\label{inflexibledgca}

Inflexibility for an oriented compact closed  manifold $M$ means that the set of mapping degrees ranging over all continuous self-maps is finite.
By composition of self-maps it is obvious that it is equivalent to demanding that all its self-maps  have degree $-1, 0, \text{ or } 1$.
For an elliptic (hence Poincar\'e duality)  Sullivan algebra $(\Lambda W, d)$ of formal dimension $n$, inflexibility means that, for every
 $f \in \Hom ( (\Lambda W, d) ,(\Lambda W, d)  )$ and, for $x$ a representative of the fundamental class in  $H^{n} ( \Lambda W, d ) $,
 the cohomology class $[f(x)] =a [x] $, with $a \in \{-1, 0, 1\}$.

\begin{proposition}\label{tilde}
Let $\mathcal A = (\Lambda W, d)$ be a $1$-connected elliptic Sullivan algebra of formal dimension $2n$. Choose $x \in \mathcal A^{2n}$ representing the fundamental class in  $H^{2n} (\A)$. Define the Sullivan algebra
 $\widetilde \A =  (\Lambda W \otimes \Lambda(y),  \tilde d) $ with
 $\tilde d \vert_W = d$, $\deg y  = 2n-1$, and $ \tilde d (y) = x.$
Then,  $\widetilde \A$ is a $1$-connected elliptic Sullivan algebra of formal dimension $4n-1$.
Moreover, if we choose $z \in \A^{4n-1}$ such that
$d(z) = x^2$ then, $xy-z$ is a representative of the fundamental class in  $H^{4n-1} (\widetilde\A)$.
\end{proposition}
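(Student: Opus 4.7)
The plan is to prove the three claims in order: that $\widetilde\M$ is a Sullivan algebra, that it is elliptic of formal dimension $4n-1$, and that $[xy-z]$ is a generator of the top cohomology.

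First, the Sullivan condition is straightforward: adjoining the single odd generator $y$ of degree $2n-1$ with $\tilde d(y)=x\in\Lambda W$ preserves the Sullivan property, since we may order the generators of $\widetilde\M$ by taking those of $\M$ first and $y$ last, so that $\tilde d(y)$ lies in the subalgebra generated by previous elements. The underlying graded vector space $W\oplus\langle y\rangle$ is finite-dimensional because $W$ is, so the only remaining point for ellipticity is that $H^*(\widetilde\M)$ be finite-dimensional.

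To compute $H^*(\widetilde\M)$, I would use the short exact sequence of complexes
\begin{equation*}
0\to\M\to\widetilde\M\to\Sigma^{2n-1}\M\to 0,
\end{equation*}
where the quotient is the ideal $y\M\subset\widetilde\M$ identified with a shift of $\M$. A direct check of the connecting homomorphism shows it is multiplication by $[x]$ on cohomology. Since $\M$ is a Poincar\'e duality algebra of formal dimension $2n$, the cup product $[x]\cdot\colon H^{p-2n}(\M)\to H^p(\M)$ is an isomorphism when $p=2n$ and has either zero source or zero target otherwise. Feeding this into the long exact sequence gives $H^p(\widetilde\M)\cong H^p(\M)$ for $p\le 2n-1$ and $H^p(\widetilde\M)\cong H^{p-2n+1}(\M)$ for $p\ge 2n$. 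In particular $H^p(\widetilde\M)=0$ for $p\ge 4n$, confirming that $\widetilde\M$ is elliptic with formal dimension at most $4n-1$, and $H^{4n-1}(\widetilde\M)\cong H^{2n}(\M)=\mathbb Q$.

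For the fundamental class, I would first verify by direct computation that $\tilde d(xy-z)=xx-x^2=0$ (using that $x$ has even degree so no Koszul sign appears and that $d(z)=x^2$). To see $[xy-z]\neq 0$, I would use the isomorphism $H^{4n-1}(\widetilde\M)\cong H^{2n}(\M)$ constructed above: the projection $\widetilde\M\to\widetilde\M/\M\cong\Sigma^{2n-1}\M$ sends $xy-z$ to $xy$, which corresponds to the generator $[x]$ of $H^{2n}(\M)$. Alternatively, a short hands-on argument works: writing any element of $\widetilde\M^{4n-2}$ as $a+by$ with $a\in\M^{4n-2}$, $b\in\M^{2n-1}$, one has $\tilde d(a+by)=da+(db)y-bx$, and equating with $xy-z$ would force $db=x$ in $\M$, contradicting that $[x]$ is the nontrivial fundamental class of $\M$. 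Hence $[xy-z]$ generates $H^{4n-1}(\widetilde\M)$. The main subtlety is the long exact sequence computation, specifically identifying the connecting map as multiplication by $[x]$ with the correct sign; once this is done, everything else is essentially bookkeeping.
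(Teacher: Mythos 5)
Your proof is correct, and its final part (checking $\tilde d(xy-z)=0$ and ruling out $xy-z=\tilde d(a+by)$ by reducing to $db=x$) is essentially identical to the paper's argument for the fundamental class. Where you genuinely diverge is in establishing ellipticity and the formal dimension. The paper argues very briefly: since $y$ is odd, $(W\oplus\Q y)^{\mathrm{even}}=W^{\mathrm{even}}$, so the nilpotence criterion for ellipticity passes from $\M$ to $\widetilde\M$, and the formal dimension $4n-1$ is then read off from the arithmetic formula $\sum\deg x_i-\sum(\deg y_j-1)$, which gains $\deg y=2n-1$. You instead compute $H^*(\widetilde\M)$ outright from the short exact sequence $0\to\M\to\widetilde\M\to\widetilde\M/\M\cong\Sigma^{2n-1}\M\to0$ (note $y\M$ itself is not a subcomplex, only the quotient carries the shifted structure, as your phrasing almost but not quite says), with connecting map multiplication by $[x]$; Poincar\'e duality in $\M$ then kills everything above degree $4n-1$ and leaves $H^{4n-1}(\widetilde\M)\cong H^{2n}(\M)=\Q$. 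Your route is longer but buys more: it determines all of $H^*(\widetilde\M)$, it proves the formal dimension is exactly $4n-1$ without invoking the degree formula for elliptic algebras, and it gives a second, structural proof that $[xy-z]\neq0$ via the projection to the quotient. The paper's route is shorter but leans on two black boxes (the nilpotence characterization of ellipticity and the formal-dimension formula). Both are valid; no gap in yours.
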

\begin{proof} First,  notice that since $(W \oplus \Q y)^{\text{even}} = W^{\text{even}}$, every element in $H^\ast (\widetilde \A)$ is nilpotent because every element in $H^\ast ( \A)$ is nilpotent.  Hence
$\widetilde \A$ is elliptic and the formal dimension is easily obtained by Equation (\ref{dimensiontop}).

Now, $\widetilde d (xy-z ) =  x \widetilde d(y) -d (z) =  0$. Let us see that it is not a boundary. Assume that $xy-z = \widetilde d ( \omega)$ for $\omega  =  \omega_1 y  + \omega_2 \in \widetilde \A ^{4n-2}$, $\omega_1, \omega_2 \in \A$.
Then, $xy-z = d (\omega_1) y  + \omega_1x + d(\omega_2)$. Since $z, \omega_1 x, d(\omega_2) \in \A$, we deduce that
$xy = d (	\omega_1) y$, and so $x = d( \omega_1)$. This contradicts the fact that $x $ is a representative of the fundamental class.
\end{proof}

\begin{lemma}\label{inflexible} The elliptic Sullivan algebra $\widetilde \A$ is inflexible if $\A $ is inflexible.  Moreover, $[\A, \A] \cong [\widetilde \A, \widetilde \A]$ as monoids and, in particular, $\E (\A) \cong \E (\widetilde \A)$.
\end{lemma}
\begin{proof}

For $\widetilde f \in \Hom (\widetilde \A, \widetilde \A)$, let $f$ denote $\widetilde f \vert_\A$. Then $\widetilde f(xy-z) = f(x)\widetilde f (y) - f(z)$.
Since $\A$ is inflexible, $f(x) = ax  + d({m_x})$ with $a \in \{1, 0, -1\}$. Applying $d$ to $f(z)$, and using $d(z) = x^2$, again a straightforward calculation shows that $$f(z) = a^2z + (2axm_x + m_xd(m_x)) + d(\gamma).$$ Applying now $\widetilde d$ to $\widetilde f (y)$, and using
$\widetilde d (y) = x$, a straightforward calculation shows that $$\widetilde f (y) = ay + m_x + d(\gamma').$$  Hence
$[\widetilde f (xy-z) ] = a^2 [xy-z]$, proving that  $\widetilde \A$ is inflexible.

 Observe now that any $\widetilde f \in \Hom (\widetilde \A, \widetilde \A)$ is determined,  up to homotopy,  by its rectriction to $\A$.
The only undetermined term appears when  $\widetilde f (y)$ is computed. This means that if $\widetilde f_1 \vert_\A $ and $ \widetilde f_2 \vert_\A $ are equal, then $\widetilde f_1 (y) - \widetilde f_2 (y) = d(\gamma'_1 - \gamma'_2). $ Hence $\widetilde f_1$ and $\widetilde f_2$ are homotopic.  The same way, any $f  \in  \Hom ( \A, \A)$ can be extended to $ \Hom (\widetilde \A, \widetilde \A)$ in, up to homotopy, a unique way.
\end{proof}

We can now prove Theorem \ref{realizationinflex}.
\begin{proof}[Proof of Theorem \ref{realizationinflex}]
Let $G$ be a finite group. There exists a finite and connected graph $\G = (V, E)$ such that $\Aut (\G) \cong G$ (Theorem \ref{Frucht}).
 Associated to the graph $\G$ of order $n$,  there exists a $1$-connected elliptic minimal Sullivan algebra $\M_\G$ (of formal dimension
$208 + 80n$) such that
$\Aut (\G)  \cong \E (\M_\G)$ (Theorem \ref{sullgraph}). We modify $\M_\G$ into
an elliptic minimal Sullivan algebra $\widetilde \M_\G$ of formal dimension $(416 + 160n )- 1$ (Proposition \ref{tilde}) which,
by Lemma \ref{inflexible} is inflexible since $\M_\G$ is inflexible. This is clear  since $[\M_\G, \M_\G]  \cong G \sqcup\{f_0, f_1\}$ is finite, and because
of the multiplicativity of the mapping degree.

Now, since $415 + 160n  \not \equiv 0 \pmod{4}$,  the theorems of Sullivan  \cite[Theorem (13.2)]{Su} and Barge \cite[Th\'eor\`eme 1]{Ba} give a sufficient condition for the realization of $\widetilde \M_\G$ by a simply connected manifold $M$.

Finally,  again by Lemma \ref{inflexible}, $\E (\M_\G) \cong \E(\widetilde \M_\G).$
Hence, putting the isomorphisms of groups altogether, we get
$$G \cong \Aut (\G) \cong \E (\M_\G) \cong \E(\widetilde \M_\G) \cong \E (M_0),$$
where $M_0$ is the rational homotopy type of $M$.
\end{proof}

The question of whether certain orientation-reversing maps on manifolds exist is treated in the literature (see for example \cite{Pup, Am}). Examples of these manifolds
are provided by Theorem~\ref{realizationinflex}.
\begin{corollary} For any $n >1$, there exists a simply connected manifold $M$ of dimension $415 + 160n$ that does not admit a reversing orientation self-map.
\end{corollary}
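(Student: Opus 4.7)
The plan is to combine Theorem~\ref{realizationinflex} with a closer reading of the computation that appears in the proof of Lemma~\ref{inflexible}. Given $n>1$, I would first apply Theorem~\ref{realizationinflex} to any finite group $G$ (the trivial group is already enough) to produce a simply connected compact manifold $M$ of dimension $415+160n$ whose rationalization $M_{0}$ is the geometric realization of the elliptic Sullivan algebra $\widetilde{\M}_{\G}$ for some finite connected graph $\G$ supplied by Theorem~\ref{Frucht}.

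The crucial observation is that the calculation performed in Lemma~\ref{inflexible} yields something strictly sharper than inflexibility. For every $\widetilde f \in \Hom(\widetilde{\M}_{\G},\widetilde{\M}_{\G})$ the proof establishes the explicit identity
\[
[\widetilde f(xy-z)] \;=\; a^{2}\,[xy-z],
\]
where $a$ is the scalar determined by $f(x) = ax + d(m_{x})$ for the restriction $f = \widetilde f\vert_{\M_{\G}}$. Since $a^{2}\in\{0,1\}$, \emph{every} Sullivan self-map of $\widetilde{\M}_{\G}$ has degree $0$ or $1$; in particular no such self-map has degree $-1$.

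To conclude, I would invoke the fact that the mapping degree of a continuous self-map of the simply connected manifold $M$ is a rational homotopy invariant and equals the degree of the induced self-morphism of the minimal model $\widetilde{\M}_{\G}$. Hence an orientation-reversing self-map $\phi\colon M\to M$, necessarily of degree $-1$, would provide a Sullivan self-map of $\widetilde{\M}_{\G}$ of degree $-1$, contradicting the previous paragraph. No serious obstacle is expected in carrying this out: the entire numerical computation is already packaged in Lemma~\ref{inflexible}, and the only point that deserves explicit verification is that every homotopy class of self-maps of $M$ is realized, through minimal models, by some $\widetilde f$ covered by that computation. This follows from Sullivan's equivalence between rational homotopy types of simply connected spaces of finite type and homotopy classes of minimal Sullivan models, together with Remark~\ref{lastremark} which identifies $[\widetilde{\M}_{\G},\widetilde{\M}_{\G}]$ with $[\M_{\G},\M_{\G}]$ as monoids.
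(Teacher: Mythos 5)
Your proof is correct and follows essentially the same route as the paper: both extract from the computation in Lemma~\ref{inflexible} the identity $\deg(\widetilde f)=\deg(\widetilde f\vert_{\M_\G})^{2}\in\{0,1\}$ and transfer it to continuous self-maps of $M$ via Sullivan's equivalence. One small imprecision: to obtain dimension exactly $415+160n$ the graph $\G$ must have exactly $n$ vertices, so rather than feeding the trivial group into Theorem~\ref{realizationinflex} (Frucht's theorem gives no control on the order of the graph, and no asymmetric connected graph of order $n$ exists for small $n$), one should simply take any connected graph of order $n$ and run the construction of Theorem~\ref{sullgraph}, Proposition~\ref{tilde} and Lemma~\ref{inflexible} directly, which is what the paper does.
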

\begin{proof} The existence of such a manifold $M$ is given by Theorem \ref{realizationinflex} for a graph $\G$ of order~$n$, with $\widetilde \M_\G$ being the minimal Sullivan algebra of $M$.  Now, any self-map of $\widetilde \M_\G$, is shown to verify $\deg (\widetilde f) = \deg (\widetilde f \vert_{\M_\G})^2$ (proof of Lemma \ref{inflexible}). Therefore, any self-map of $M$ has either degree  $0$ or $1$.
\end{proof}

%%%%%%%%%%%%%%%%%%%%%%%%%%%%%%%%%%
% Bibliography
%%%%%%%%%%%%%%%%%%%%%%%%%%%%%%%%%%

\end{document}